\newcommand{\C}{\mathbb{C}}
\newcommand{\Z}{\mathbb{Z}}
\newcommand{\U}{\mathcal{U}}
\newcommand{\uld}{\overline{\operatorname{logdens}}}
\newcommand{\ud}{\overline{\operatorname{dens}}}
\newtheorem{defin}{Definition}[section]
\newtheorem{theorem}[defin]{Theorem}
\newtheorem{exa}[defin]{Example}
\newenvironment{example}{\begin{exa}\rm}{\end{exa}}\newtheorem{lemma}[defin]{Lemma}
\newtheorem{corollary}[defin]{Corollary}
\newenvironment{remark}
{\par\vspace{0.5mm}\noindent{\bf Remark.}}{\par\vspace{0.5mm}}
\numberwithin{equation}{section}
\renewcommand{\ps@myheadings}{%
\renewcommand{\@evenhead}%
{{\rm\thepage}\hfil{\sc Heittokangas, Pulkkinen, Yu, Zemirni}\hfil}%
\renewcommand{\@oddhead}%
{\hfil{{\sc Standard solutions of complex differential equations}\hfil{\rm\thepage}}}%
\renewcommand{\@evenfoot}{}%
\renewcommand{\@oddfoot}{}%
}\makeatother \pagestyle{myheadings}
\title{Standard solutions of complex linear differential equations}
\begin{document}

\author{J.~Heittokangas*, S.~Pulkkinen, H.~Yu and M.~A.~Zemirni}

\maketitle

\begin{abstract}
A meromorphic solution of a complex linear differential equation (with meromorphic coefficients) for which the value zero is the only possible finite deficient/deviated value is called a standard solution. Conditions for the existence and the number of standard solutions are discussed for various types of deficient and deviated values. 

\medskip\noindent
\textbf{Keywords.} Deficient value, entire function, linear differential equation, magnitude of deviation, meromorphic function, standard solution.

\medskip\noindent
\textbf{MSC 2020.} Primary 34M05, secondary 30D35.
\end{abstract}

\renewcommand{\thefootnote}{}
\footnotetext[1]{*Corresponding author.}
\footnotetext[2]{The second author was supported by the Vilho, Yrj\"o and Kalle V\"ais\"al\"a Foundation of the Finnish Academy of Science and Letters, and by the Oskar \"Oflunds Stiftelse sr.}

%
%

\section{Background}\label{Back-sec}

\thispagestyle{empty}

We focus on deficient and deviated values of solutions of linear differential equations 
    \begin{equation}\label{lden}
        f^{(n)}+A_{n-1}(z)f^{(n-1)}+\cdots+A_1(z)f'+A_0(z)f=0 
    \end{equation}
with entire or meromorphic coefficients $A_0,\ldots, A_{n-1}$. The solutions are known to be entire in the case of entire coefficients, while the existence of meromorphic solutions is not guaranteed if the coefficients are meromorphic. For example, the equation
	$$
	f''+2z^{-1}f'-z^{-4}f=0
	$$
with rational coefficients has a non-meromorphic solution $f(z)=\exp\left(z^{-1}\right)$.

For a meromorphic function $f$ in $\C$, and for $a\in\widehat{\C}$, we define the quantities
    \begin{equation*}
    \delta_N(a,f):=\liminf_{r\to\infty}\frac{m(r,a,f)}{T(r,f)}, 
     \end{equation*}
     \begin{equation*}
    \delta_P(a,f):=\liminf_{r\to\infty}\frac{\mathscr{L}(r,a,f)}{T(r,f)},
    \end{equation*}
where $m(r,a,f)$ denotes the proximity function, $T(r,f)$ is the Nevanlinna
characteristic of $f$, and  
	\begin{equation*}
	\mathscr{L}\left(r,a,f\right):=\left\{\begin{array}{ll}
	\max_{|z|=r}\log^+ \frac{1}{|f(z)-a|}, &\ a\in \mathbb{C} ,\\[7pt]
	\max_{|z|=r}\log^+ {|f(z)|}, &\ a=\infty,
	\end{array}\right.
	\end{equation*}
is the \emph{logarithmic maximum modulus} for the $a$-points of $f$. It is clear that
    $$
    0\leq\delta_N(a,f)\leq1\quad \text{and}\quad0\leq\delta_N(a,f)\leq\delta_P(a,f)\leq\infty.
    $$
The equality $\delta_P(a,f)=\infty$ is possible, for example, when $f(z)=\exp\big(e^z\big)$ and $a=\infty$. Indeed, in this case $\mathscr{L}(r,\infty,f)=e^r$, while (see \cite[p.~7]{Hayman})
	$$
	T(r,f)\sim \frac{e^r}{\sqrt{2\pi^3 r}}.
	$$

The quantity $\delta_N(a,f)$, introduced by  Nevanlinna, can be written alternatively as
	$$
	\delta_N(a,f)=1-\limsup_{r\to\infty}\frac{N(r,a,f)}{T(r,f)},
	$$
where $N(r,a,f)$ is the integrated counting function of the $a$-points of $f$. If $\delta_N(a,f)>0$, then $a$ is called a \emph{deficient value} for $f$ because $f$ attains the value $a$ less often than the growth of the characteristic function $T(r,f)$ would allow. 
Meanwhile, if $\delta_P(a,f)>0$, then $a$ is called a \emph{Petrenko deviated value} for $f$, and, following \cite{BP, Petrenko}, the quantity  $\delta_P(a,f)$ is called the \emph{magnitude of the deviation} of $f$ from $a$. 

It is known that the set of deficient values is at most countable \cite{Hayman}, while the set of P-deviated values is of zero capacity but could be uncountable for functions of infinite lower order $\mu$ \cite{BP}. Given a meromorphic function $f$, the sum of all N-deficient values for $f$ is $\leq 2$ \cite{Hayman}, while the sum of all P-deviated values for $f$ is $\leq K(\mu+1)$ for some constant $K>0$  \cite{BP}.

A solution $f$ of \eqref{lden} satisfying  $\delta_P(a,f)=0$ (resp.~$\delta_N(a,f)=0$) for every $a\in\mathbb{C}\setminus\{0\}$ is called a \emph{P-standard solution} (resp.~\emph{N-standard solution}). The notion ``standard'' is from Petrenko \cite{BP}, but the prefixes are added in order to identify the right quantity we are dealing with.  In particular, a P-standard solution is also an N-standard solution, but not necessarily conversely.

As for results on the equation \eqref{lden}, we begin with a well-known result that is originally due to H.~Wittich \cite{Wittich}. This result is often considered to be one of the corner stones of the oscillation theory of complex differential equations.

\begin{theorem}[{\cite[Theorem~4.3]{Laine}}]\label{N-standard-thm}
Suppose that the coefficients $A_0,\ldots, A_{n-1}$ in \eqref{lden} are meromorphic, and that $f$ is an admissible meromorphic solution of \eqref{lden} in the sense that 
    \begin{equation}\label{Wittich-assumption}
    T(r,A_j)=o(T(r,f)),\quad r\not\in E,\, j=0,\ldots,n-1,
    \end{equation}
where $E\subset[0,\infty)$ is a set of finite linear measure.   
Then $0$ is the only possible finite deficient value for $f$.
\end{theorem}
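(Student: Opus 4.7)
My plan is to derive a representation for $1/(f-a)$ directly from \eqref{lden} and then estimate its proximity function via the Nevanlinna logarithmic derivative lemma combined with the admissibility hypothesis \eqref{Wittich-assumption}. I would fix $a \in \C \setminus \{0\}$, noting that the argument below tacitly requires $A_0 \not\equiv 0$: this assumption is essential, since for instance $f(z) = a + be^{-z}$ is an admissible solution of $f''+f'=0$ for which every value $a \neq 0$ is Picard exceptional.

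Substituting $f = (f-a)+a$ into \eqref{lden} and using $f^{(k)} = (f-a)^{(k)}$ for $k \geq 1$, the only surviving constant contribution is $aA_0$, which I would isolate to obtain
\begin{equation*}
\frac{1}{f-a} = -\frac{1}{aA_0}\left(\frac{(f-a)^{(n)}}{f-a}+\sum_{k=1}^{n-1} A_k \frac{(f-a)^{(k)}}{f-a}+A_0\right).
\end{equation*}
Applying $m(r,\cdot)$ to both sides and using subadditivity of the proximity function then yields
\begin{equation*}
m\!\left(r,\tfrac{1}{f-a}\right) \leq m(r,1/A_0) + \sum_{k=1}^n m\!\left(r, \tfrac{(f-a)^{(k)}}{f-a}\right) + \sum_{k=0}^{n-1} m(r,A_k) + O(1).
\end{equation*}

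By Nevanlinna's logarithmic derivative lemma applied to $f-a$ (whose characteristic equals $T(r,f)+O(1)$), each logarithmic derivative term on the right is $O(\log r + \log T(r,f))$ outside an $r$-set of finite linear measure. The coefficient terms are controlled by \eqref{Wittich-assumption}, via $m(r,A_k) \leq T(r,A_k)+O(1)$ and $m(r,1/A_0) \leq T(r,A_0)+O(1)$, and are thus $o(T(r,f))$ off the admissibility exceptional set. Taking the union of the two exceptional sets (still of finite linear measure) and using $m(r,1/(f-a))=m(r,a,f)$, I would conclude $m(r,a,f)=o(T(r,f))$ as $r\to\infty$ outside that union, hence $\delta_N(a,f)=0$. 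The heaviest lifting is done by the logarithmic derivative lemma; once the algebraic identity above is in hand, what remains is bookkeeping of the exceptional sets and the $o(\cdot)$ estimates.
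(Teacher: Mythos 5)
Your argument is correct and is essentially the standard Wittich--Laine proof, which the paper itself adapts for the P-standard analogue in Theorem~\ref{P-4.3}: the same algebraic representation of $1/(f-a)$ obtained from \eqref{lden}, the Nevanlinna logarithmic derivative lemma to control the ratio terms, and the First Main Theorem to bound $m(r,1/A_0)$ (which is precisely what makes a separate hypothesis on $A_0^{-1}$ unnecessary here, as the paper also notes before Theorem~\ref{alternative-thm}). Your remark that $A_0\not\equiv 0$ is tacitly required, together with the counterexample $f(z)=a+be^{-z}$ for $f''+f'=0$, is a valid and useful clarification of a hypothesis left implicit in the statement.
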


In our terminology, the solution $f$ in Theorem~\ref{N-standard-thm} is an N-standard solution. Note that if the coefficients $A_0,\ldots, A_{n-1}$ are rational and if $f$ is a transcendental meromorphic solution of \eqref{lden}, then \eqref{Wittich-assumption} is clearly valid.

Exceptional sets of finite linear/logarithmic measure are very typical in Nevanlinna theory and its applications. However, it is apparent from the proof of Theorem~\ref{N-standard-thm} in \cite{Laine} that the set $E$ in \eqref{Wittich-assumption} could be much larger. For example, we could equally well assume that $\ud (E)<1$, where 
	$$
	\ud(E):=\limsup_{r\to\infty}\frac{\int_{E\cap [0,r]}dt}{r}
	$$
is the upper linear density of $E$. It is clear that $0\leq \ud (F)\leq 1$ for every set $F\subset [0,\infty)$, and that $\ud (F)=0$ whenever $F$ has finite linear measure.  

Theorem~\ref{N-standard-thm} brings us to the question of how typical it is for \eqref{lden} to possess an N-standard solution? A partial answer lies in the following result.

\begin{theorem}[{\cite[Theorem~2.3]{HYZ}}]\label{asymptotic-cor}
Let the coefficients $A_{0},\ldots,A_{n-1}$ in \eqref{lden} be entire functions such that at least one of them is transcendental. Suppose that $p\in\{0,\ldots,n-1\}$ is the smallest index such that
	\begin{equation}\label{2LM2}
	\limsup_{\substack{r\to \infty }}\sum_{j=p+1}^{n-1}\frac{\log^+ M(r,A_{j})}{\log^+ M(r,A_{p})}<1.
	\end{equation}
Then $A_p$ is transcendental, and every solution base of \eqref{lden} has $m\geq n-p$ solutions $f$ for which
 	\begin{equation}\label{result-th1.3}
 	\log T(r,f)\asymp\log M(r,A_p),\quad r\not\in E,
 	\end{equation}
where $E \subset [0,\infty)$ has finite linear measure. For each such solution $f$, the value $0$ is the only possible finite deficient value.
\end{theorem}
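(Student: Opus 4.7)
The plan is to address the three assertions of the theorem in order. For the first, I would argue by contradiction: if $A_p$ were polynomial, then $\log^+ M(r,A_p)=O(\log r)$, and condition \eqref{2LM2} would force each $A_j$ with $j>p$ to be polynomial as well. Since some coefficient is transcendental by hypothesis, let $q<p$ be the largest index with $A_q$ transcendental. Then $A_j$ is polynomial for every $j>q$, so
$$\sum_{j=q+1}^{n-1}\log^+ M(r,A_j)=O(\log r)=o\bigl(\log^+ M(r,A_q)\bigr),$$
which means that \eqref{2LM2} would hold with $q$ in place of $p$, contradicting the minimality of $p$. Hence $A_p$ must be transcendental.

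\textbf{Step 2: the asymptotic and the counting.} For a non-trivial solution $f$ of \eqref{lden} with $f^{(p)}\not\equiv 0$, I would rewrite the equation as
$$A_p=-\frac{f^{(n)}}{f^{(p)}}-\sum_{\substack{0\leq j\leq n-1\\ j\neq p}}A_j\,\frac{f^{(j)}}{f^{(p)}},$$
evaluate at a point $z$ with $|z|=r$ realizing $|A_p(z)|=M(r,A_p)$, and apply Gundersen-type logarithmic derivative estimates $|f^{(j)}(z)/f^{(p)}(z)|\leq T(2r,f)^{K}$ on the complement of an exceptional set $E\subset[0,\infty)$ of finite linear measure. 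Using \eqref{2LM2} to absorb the $j>p$ contributions into a strict proportion of $\log M(r,A_p)$ yields
$$\log M(r,A_p)\leq C\log T(r,f)+O(\log r),\quad r\notin E,$$
and so $\log T(r,f)\gtrsim\log M(r,A_p)$; the reverse bound $\log T(r,f)\lesssim\log M(r,A_p)$ follows from the standard coefficient-based growth estimate for solutions of \eqref{lden}, again invoked with \eqref{2LM2}. For the counting, $f^{(p)}\equiv 0$ characterises the polynomial solutions of degree $<p$, and these form a subspace of the $n$-dimensional solution space of dimension at most $p$; hence any basis has at most $p$ elements with $f^{(p)}\equiv 0$, leaving $m\geq n-p$ that satisfy \eqref{result-th1.3}.

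\textbf{Step 3: deficient value, and the main obstacle.} Since $A_p$ is transcendental, \eqref{result-th1.3} gives $\log T(r,f)\to\infty$ outside $E$. To conclude via Theorem~\ref{N-standard-thm}, I would verify $T(r,A_j)=o(T(r,f))$ outside a set of finite linear measure. For $j\geq p$ this is immediate, since $T(r,A_j)\leq\log^+ M(r,A_j)\leq C\log^+ M(r,A_p)\asymp\log T(r,f)=o(T(r,f))$. For $j<p$, the minimality of $p$ produces (by chaining the failed \eqref{2LM2} at each index $j,j+1,\ldots,p-1$) a sequence along which $\log M(r,A_j)\lesssim\log M(r,A_p)$, and the complementary ``bad'' radii can be absorbed into an enlargement of $E$ of finite linear measure. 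The main obstacle is really Step 2: the delicate combination of Gundersen / Wiman--Valiron logarithmic derivative estimates with the correct absorption of exceptional radii and the subspace-counting bookkeeping is the technical core of the argument, as carried out in detail in \cite{HYZ}.
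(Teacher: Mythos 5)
Your Step 1 is essentially correct and matches the natural argument. The remaining steps contain genuine gaps.

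\textbf{On Step 2, the estimate.} Dividing the equation by $f^{(p)}$ produces quotients $f^{(j)}/f^{(p)}$ with $j<p$. These are \emph{not} covered by Gundersen-type logarithmic-derivative estimates, which only bound $|f^{(k)}(z)/f^{(\ell)}(z)|$ for $k>\ell$; there is no general bound of the form $|f^{(j)}(z)/f^{(p)}(z)|\leq T(2r,f)^K$ when $j<p$, and such ratios can be arbitrarily large. In addition, pointwise Gundersen estimates are not available for every $z$ on every circle $|z|=r\notin E$: they hold after excluding small discs around zeros and poles, or they hold in integrated form ($m(r,\cdot)$), or one works near a point where $|f|$ (not $|A_p|$) is maximal, as in Wiman--Valiron theory. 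Your choice of base point (where $|A_p(z)|=M(r,A_p)$) is generally incompatible with the points where such bounds are valid.

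\textbf{On Step 2, the counting.} The claim that any basis element with $f^{(p)}\not\equiv 0$ must satisfy \eqref{result-th1.3} is false. Example~\ref{example} in the paper gives a concrete counterexample: for $f''+e^{-z}f'-f=0$, the smallest admissible index is $p=1$, yet the solution $f_1(z)=1+e^z$ has $f_1'=e^z\not\equiv 0$ while $\log T(r,f_1)\asymp\log r$, whereas $\log M(r,A_1)\asymp r$, so $f_1$ does not satisfy \eqref{result-th1.3}. Thus your ``dimension-of-polynomial-solutions'' argument cannot yield the count $m\geq n-p$. The actual count in \cite{HYZ} (which this paper cites without reproducing) is established by a genuinely different argument: roughly, one shows that if too many basis elements were small, one could form a reduced-order equation or Wronskian-type relation whose coefficients would contradict the dominance of $A_p$.

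\textbf{On Step 3.} Your chaining argument for $j<p$ only produces a \emph{sequence} $r_k\to\infty$ along which $\log M(r_k,A_j)\lesssim\log M(r_k,A_p)$, since the failure of \eqref{2LM2} at each $q<p$ is a $\limsup$ statement. Claiming that the complementary set of radii has finite linear measure is not justified; it could have positive upper density. The comparison \eqref{Canada1}, quoted in the body of the paper, is taken from the proof in \cite{HYZ} and requires a more careful argument than what you give.

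\textbf{General remark.} Note that this paper does not itself prove Theorem~\ref{asymptotic-cor}; it cites \cite[Theorem~2.3]{HYZ}. What is extracted here and used in the proof of Theorem~\ref{thefirst-mainthm} are precisely the two byproducts \eqref{Canada1} and \eqref{Canada2}, both of which require substantial work beyond your sketch.
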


In our terminology, the condition \eqref{2LM2} induces $m\geq n-p$ N-standard solutions in every solution base of \eqref{lden}. It follows  that every solution base of \eqref{lden} with entire coefficients has at least one N-standard solution, with no conditions on the coefficients other than that they must be entire. 

We proceed to discuss known results on P-standard solutions of \eqref{lden}.

\begin{theorem}[{\cite[Theorem~2]{BP}}]\label{atleast-one-standard-thm}
Suppose that the coefficients $A_0,\ldots, A_{n-1}$ in \eqref{lden} are entire. Then every solution base of \eqref{lden} has at least one P-standard solution.
\end{theorem}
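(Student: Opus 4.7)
The plan is to argue by contradiction, leveraging the zero-capacity property of the set of P-deviated values (recalled in the introduction) together with the linear structure of the solution space of \eqref{lden}. Let $\{f_1,\dots,f_n\}$ be a solution base and suppose, toward a contradiction, that no $f_j$ is P-standard, so that for every $j$ one can pick $a_j\in\C\setminus\{0\}$ with $\delta_P(a_j,f_j)>0$. Since the coefficients are entire, every $f_j$ is entire and $\delta_P(\infty,f_j)\ge 1$, because $\mathscr{L}(r,\infty,f_j)=\log^+ M(r,f_j)\ge T(r,f_j)$. Each positive deviation $\delta_P(a_j,f_j)>0$ yields a constant $\eta_j>0$ and an unbounded sequence $r_k^{(j)}\to\infty$ on which some point $z_k^{(j)}$ with $|z_k^{(j)}|=r_k^{(j)}$ satisfies $|f_j(z_k^{(j)})-a_j|\le\exp\bigl(-\eta_j\,T(r_k^{(j)},f_j)\bigr)$, so at that point $f_j$ is exponentially close to the nonzero constant $a_j$.

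The main idea is to exploit the identity $g_{\mathbf{c}}-\sum_j c_j a_j=\sum_j c_j(f_j-a_j)$ for $g_{\mathbf{c}}:=c_1 f_1+\dots+c_n f_n$ to promote pointwise information on each $f_j-a_j$ into a Petrenko deviation of a single linear combination. Concretely, if on a common unbounded radial family the $n$ differences $f_j-a_j$ are simultaneously exponentially small at a common angular location, then $\sum_j c_j a_j$ becomes a P-deviated value of the solution $g_{\mathbf{c}}$. Varying $\mathbf{c}$ continuously through a one-parameter subfamily of $\C^n$, a diagonal extraction then selects one limiting $g_{\mathbf{c}_0}$ whose set of P-deviated values contains the continuous image of an interval, hence has positive logarithmic capacity. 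This contradicts the zero-capacity fact recalled in the introduction and proves the theorem.

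The decisive step, and the main obstacle, is the common-radius/common-angle localisation: a priori the sequences $\{r_k^{(j)}\}$ and points $z_k^{(j)}$ produced by distinct basis elements are unrelated, and moreover the characteristics $T(r,f_j)$ may grow at different rates, so that raw smallness of $f_j-a_j$ need not translate to the rate required for $g_{\mathbf{c}}$. To handle this I would combine two ingredients. First, following the remark after Theorem~\ref{N-standard-thm}, the exceptional sets in Nevanlinna-type estimates for solutions of \eqref{lden} have upper linear density strictly less than one, so the intersection of the ``good'' radial sets for $f_1,\dots,f_n$ remains unbounded. Second, $f_j-a_j$ satisfies the nonhomogeneous equation $L[f_j-a_j]=-a_j A_0$, where $L$ denotes the linear differential operator on the left-hand side of \eqref{lden}; combined with Cauchy- or Gundersen-type derivative estimates on concentric annuli, this should propagate the pointwise smallness of $f_j-a_j$ to a short arc of the circle, after which an angular pigeonhole across $j=1,\dots,n$ provides a common approximation point. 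A careful normalisation of the exponents $\eta_j$ against the slowest-growing $T(r,f_j)$ completes the localisation, and then the continuum-of-deviations construction of the previous paragraph closes the argument.
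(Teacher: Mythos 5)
Your proposal takes a genuinely different route from the standard one (which in effect goes through the growth criterion of Theorem~\ref{th-1-BP}, or the refinement Theorem~\ref{th-3-BP}), but it contains two gaps that cannot be closed as sketched.

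First, the synchronization problem you flag as the ``decisive step'' is real, and the remedy you propose does not resolve it. A positive deviation $\delta_P(a_j,f_j)>0$ only tells you that for each large $r$ there is \emph{some} angle $\theta_j(r)$ with $|f_j(re^{i\theta_j(r)})-a_j|\le\exp(-\eta_j T(r,f_j))$; it gives no control on where on the circle this happens, and the angles for different $j$ will in general be disjoint. Cauchy-- or Gundersen--type derivative bounds do propagate smallness to an arc, but only to an arc of length on the order of $\exp(-\eta_j T(r,f_j))$, which shrinks to zero. A pigeonhole over the $n$ indices would need the total arc length to exceed $2\pi$, which is nowhere near the case, so one cannot force the arcs to intersect. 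There is simply no mechanism in the equation that locks the exceptional directions of different basis solutions together.

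Second, and more seriously, even if simultaneous smallness were achieved, the zero-capacity argument is misapplied. For each $\mathbf{c}$ you would conclude that $\sum_j c_j a_j$ is a P-deviated value of the function $g_{\mathbf{c}}$, which is a \emph{different} meromorphic function for each $\mathbf{c}$. The result recalled in Section~\ref{Back-sec} --- that the set of P-deviated values has zero capacity --- concerns a single fixed function and says nothing about the union of deviated values over a one-parameter family of functions. The ``diagonal extraction'' of a limiting $g_{\mathbf{c}_0}$ is not justified: $\delta_P$ has no useful semicontinuity in the pair $(f,a)$, and deviated values of nearby $g_{\mathbf{c}}$ need not accumulate onto deviated values of $g_{\mathbf{c}_0}$. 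Thus no contradiction is produced.

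The known argument is direct and avoids contradiction. From \eqref{characteristic-solutionbase} one has $T(r)\le\sum_{k=1}^n T(r,f_k)+O(1)$, so at least one $f_j$ in the base has $T(r,f_j)$ comparable to the characteristic function $T(r)$ on a suitable set of radii; this is precisely what is needed to verify the growth hypothesis \eqref{assu-Th1} (or \eqref{assu-Th3}), after which Theorem~\ref{th-1-BP} (or Theorem~\ref{th-3-BP}) yields that this $f_j$ is P-standard.
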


Theorem 3 in \cite{BP} shows that there are equations \eqref{lden} with entire coefficients and $n\geq 2$, which have $n-1$ linearly independent solutions that are not  P-standard.  This proves the sharpness of Theorem~\ref{atleast-one-standard-thm}. 

To discuss further results on P-standard solutions, we suppose that the coefficients of \eqref{lden} are entire, and recall that a \emph{characteristic function} of \eqref{lden}  is defined by
	\begin{equation}\label{characteristic-function}
	 T(r):=\frac{1}{2\pi}\int_0^{2\pi}\log \sqrt{1+\sum_{k=1}^n|f_k(re^{i\theta})|^2} \,d\theta,
	\end{equation} 
where $\{f_1,\ldots,f_n\}$ is a solution base for \eqref{lden}. The function $T(r)$ is essentially independent of the solution base used in defining it in the following sense.

\begin{lemma}
If $T_1(r)$ and $T_2(r)$ are any two characteristic functions of \eqref{lden}, where the coefficients are entire, then  $T_1(r)=T_2(r)+O(1)$. 
\end{lemma}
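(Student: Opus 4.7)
The plan is to view the integrand geometrically as the logarithm of a Euclidean norm in $\C^{n+1}$, and then use the fact that any two solution bases differ by an invertible linear transformation with bounded operator norm. Let $\{f_1,\ldots,f_n\}$ and $\{g_1,\ldots,g_n\}$ be the two solution bases defining $T_1(r)$ and $T_2(r)$, respectively. Since the solution space of \eqref{lden} is an $n$-dimensional complex vector space, there exists an invertible matrix $C=(c_{ij})\in GL_n(\C)$ with $g_i=\sum_{j=1}^n c_{ij}f_j$ for $i=1,\ldots,n$.

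Introduce the $\C^{n+1}$-valued functions $F(z):=(1,f_1(z),\ldots,f_n(z))^T$ and $G(z):=(1,g_1(z),\ldots,g_n(z))^T$, so that the integrand in \eqref{characteristic-function} becomes $\log\|F(z)\|$ for $T_1$ and $\log\|G(z)\|$ for $T_2$, where $\|\cdot\|$ denotes the standard Euclidean norm. The relation between the two bases lifts to the block-diagonal identity $G(z)=\widetilde{C}F(z)$, where $\widetilde{C}$ has a $1\times 1$ block equal to $1$ in the upper-left corner and $C$ in the lower-right block. Both $\widetilde{C}$ and $\widetilde{C}^{-1}$ are fixed invertible matrices independent of $z$.

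From the operator-norm inequalities $\|G(z)\|\leq\|\widetilde{C}\|\,\|F(z)\|$ and $\|F(z)\|\leq\|\widetilde{C}^{-1}\|\,\|G(z)\|$, taking logarithms yields the pointwise bound
$$
\bigl|\log\|G(z)\|-\log\|F(z)\|\bigr|\leq K,\qquad K:=\max\bigl(\log\|\widetilde{C}\|,\log\|\widetilde{C}^{-1}\|\bigr),
$$
which is a constant depending only on the change-of-basis matrix. Integrating this estimate with respect to $\theta$ on $[0,2\pi]$, dividing by $2\pi$, and using the definition \eqref{characteristic-function} immediately gives $|T_2(r)-T_1(r)|\leq K$, i.e.\ $T_1(r)=T_2(r)+O(1)$.

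There is essentially no obstacle here once the geometric interpretation is in place: the only point requiring any care is to notice that the constant term ``$1$'' inside the square root forces the lifted change-of-basis matrix to be $\widetilde{C}=\mathrm{diag}(1,C)$ rather than $C$ itself, so that it acts on the whole vector $F(z)$ correctly. Everything else is routine linear algebra together with the trivial integral $\int_0^{2\pi} K\,d\theta=2\pi K$.
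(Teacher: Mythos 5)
Your argument is correct. The paper's own proof also reduces to a pointwise bound that is then integrated, but it handles the constant term under the square root separately: it first bounds $\sum_k|f_k|^2\leq C_0\sum_j|g_j|^2$ by expanding each $f_k$ in the basis $\{g_j\}$, and then invokes the auxiliary inequality $\log\sqrt{1+C_0x}\leq\log\sqrt{1+x}+O(1)$ to absorb the multiplicative constant. Your version lifts the integrand to the Euclidean norm of the $\C^{n+1}$-valued map $F=(1,f_1,\ldots,f_n)^T$, so the "$1$" sits inside the vector from the start; the change of basis becomes the block matrix $\widetilde{C}=\mathrm{diag}(1,C)$, and the two operator-norm inequalities $\|G\|\leq\|\widetilde{C}\|\,\|F\|$, $\|F\|\leq\|\widetilde{C}^{-1}\|\,\|G\|$ give the two-sided pointwise bound in one stroke (note $\|\widetilde{C}\|,\|\widetilde{C}^{-1}\|\geq1$ since $\widetilde{C}$ fixes the first coordinate, so the logarithms are nonnegative as your $\max$ requires). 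The net content is identical, but your reformulation eliminates the helper inequality and makes both directions symmetric automatically, which is a small but genuine streamlining; the paper's route, by contrast, uses only elementary scalar estimates and never invokes operator norms.
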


\begin{proof}
Let $\{f_1,\ldots,f_n\}$ and $\{g_1,\ldots,g_n\}$ be the solution bases for \eqref{lden} defining $T_1(r)$ and $T_2(r)$, respectively. Then there are constants $C_{i,j}\in\C$ such~that
	\begin{eqnarray*}
	f_1&=& C_{1,1}g_1+C_{1,2}g_2+\cdots+C_{1,n}g_n,\\
	&\vdots&\\
	f_n&=& C_{n,1}g_1+C_{n,2}g_2+\cdots+C_{n,n}g_n.
	\end{eqnarray*}
Denoting $C=\max\{|C_{i,j}|\}$, we obtain
	\begin{eqnarray*}
	|f_k|^2&\leq& 2\big(|C_{k,1}|^2|g_1|^2+\cdots+|C_{k,n}|^2|g_n|^2\big)\\
	&\leq& 2C^2\big(|g_1|^2+\cdots+|g_n|^2\big),\quad k=1,\ldots,n.
	\end{eqnarray*}
Hence, for $C_0\geq 0$ and $x\geq 0$, we may use
	\begin{eqnarray*}
	\log\sqrt{1+C_0x} &\leq& \frac12\log^+(1+C_0x)
	\leq \frac12\log^+x+O(1)\\
	&\leq& \frac12\log(1+x)+O(1)=\log\sqrt{1+x}+O(1)
	\end{eqnarray*}
to obtain $T_1(r)\leq T_2(r)+O(1)$. By changing the roles of the two fundamental bases, we obtain $T_2(r)\leq T_1(r)+O(1)$ similarly as above.
\end{proof}

\begin{theorem}[{\cite[Theorem~1]{BP}}]\label{th-1-BP}
Suppose that the coefficients $A_0,\ldots, A_{n-1}$  in \eqref{lden} are entire, and that $T(r)$ is a characteristic function of \eqref{lden}. If $f$ is a solution of \eqref{lden} for which 
    \begin{equation}\label{assu-Th1}
       \limsup_{r\to\infty}\frac{\log^m T(r)}{T(r,f)}<\infty 
    \end{equation}
is valid for some  $m>1$, then $f$ is a P-standard solution of \eqref{lden}.
\end{theorem}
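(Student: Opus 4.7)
My plan is to argue by contradiction: suppose $f$ satisfies~\eqref{assu-Th1} but is not P-standard, so that there is some $a\in\C\setminus\{0\}$ with $\delta_P(a,f)=c>0$, and hence $\mathscr{L}(r,a,f)\geq(c/2)T(r,f)$ for all large $r$. The target is an upper bound of the form $\mathscr{L}(r,a,f)=O(\log T(r))$ along a sequence $r_k\to\infty$. Once this is in hand, the displayed lower bound forces $T(r_k,f)\leq K\log T(r_k)$, so $\log^m T(r_k)\geq (T(r_k,f)/K)^m$; since $m>1$ and $T(r_k,f)\to\infty$ for any transcendental solution, this would violate~\eqref{assu-Th1}.

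The algebraic input will be the inhomogeneous equation satisfied by $h:=f-a$. Writing $L[g]:=g^{(n)}+A_{n-1}g^{(n-1)}+\cdots+A_0g$, one has $L[h]=L[f]-L[a]=-aA_0$, and dividing by $h$ isolates
\begin{equation*}
\frac{1}{f(z)-a}=-\frac{1}{a}-\frac{1}{a\,A_0(z)}\sum_{k=1}^{n}A_k(z)\,\frac{h^{(k)}(z)}{h(z)}\qquad(A_n\equiv1).
\end{equation*}
Taking $\log^+$ and maximising over $|z|=r$ yields the master inequality
\begin{equation*}
\mathscr{L}(r,a,f)\leq \mathscr{L}(r,0,A_0)+\sum_{k=1}^{n}\log^+M(r,A_k)+\sum_{k=1}^{n}\max_{|z|=r}\log^+\!\left|\frac{h^{(k)}(z)}{h(z)}\right|+O(1).
\end{equation*}
I would then handle the logarithmic-derivative maxima by Gundersen's pointwise estimate applied to the entire function $h$, yielding $O(\log r+\log T(r,f))$ outside an $r$-set of finite linear measure. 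For the coefficient terms I would use that $A_0,\ldots,A_{n-1}$ are expressible by Cramer's rule as rational functions of any solution basis $\{f_1,\ldots,f_n\}$ used to build $T(r)$; combined with the logarithmic derivative lemma, standard Nevanlinna bookkeeping gives $T(r,A_j)=O(T(r))$ outside a second exceptional set.

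The main obstacle is passing from this Nevanlinna bound on $T(r,A_j)$ to a pointwise bound $\log^+M(r,A_j)=O(\log T(r))$ strong enough to close the argument in the first paragraph: naively, Borel--Carath\'eodory yields only $\log^+M(r,A_j)=O(T(2r))$, which is much too weak. I expect the proof to need (i) a Petrenko-type pointwise logarithmic-derivative estimate of the form $\mathscr{L}(r,\infty,F^{(k)}/F)=O(\log(rT(r,F)))$ replacing Gundersen, together with (ii) a re-grouping of the master inequality so that the ratios $A_k/A_0$ appear together rather than $A_k$ and $1/A_0$ separately; an application of the first main theorem to $A_k/A_0$ (whose Nevanlinna characteristic is $O(T(r))$) should then extract the desired $O(\log T(r))$ term, in place of the too-coarse bound on individual coefficient maxima.
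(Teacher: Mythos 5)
Your reduction is sound in outline: the contradiction strategy works (if $\delta_P(a,f)=c>0$ for some $a\neq0$, then $\mathscr{L}(r,a,f)\ge(c/2)T(r,f)$ for \emph{all} large $r$ since $\delta_P$ is a liminf, and an upper bound $\mathscr{L}(r,a,f)\lesssim\log T(r)\cdot\textnormal{polylog}$ along an unbounded $r$-sequence then collides with~\eqref{assu-Th1} because $m>1$), and the master inequality you derive from $L[f-a]=-aA_0$ is the same one the paper uses. You also correctly diagnose the central difficulty: one must bound $\mathscr{L}(r,\infty,A_j)$, a \emph{pointwise} maximum, not $T(r,A_j)$, and Borel--Carath\'eodory loses too much. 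Your item (i) is the right fix and matches the paper's Lemma~\ref{LD-lemma} ($\log^+|F^{(k)}(z)/F^{(j)}(z)|\lesssim\log T(r,F)+\log r$ for every $|z|=r$ outside a finite-measure set). Feeding that into Cramer's rule for $A_j$ in terms of a solution basis gives $\mathscr{L}(r,\infty,A_j)=O(\log r+\log T(r))$ directly (Lemma~\ref{FHP-lemma} and \eqref{FHP-conclusion2}), with no intermediate passage through $T(r,A_j)$.

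The genuine gap is item (ii). Regrouping as $A_k/A_0$ and invoking the First Main Theorem cannot control $\mathscr{L}(r,0,A_0)=\max_{|z|=r}\log^+|1/A_0(z)|$: the FMT bounds the \emph{averaged} proximity $m(r,0,A_0)\le T(r,A_0)+O(1)$, whereas the sup is $+\infty$ on every circle through a zero of $A_0$ and is in general not dominated by $T(r,A_0)$. Passing to the meromorphic ratios $A_k/A_0$ only relabels the same singularity as an $\mathscr{L}(r,\infty,\cdot)$ term for a function with poles. The missing ingredient is a Cartan--Levin \emph{minimum modulus} estimate, which the paper supplies as Lemma~\ref{minimum-modulus-lemma}: for transcendental entire $g$ with $g(0)=1$, any $\delta>0$, and any $m'>1$, there is $F\subset[0,\infty)$ with $\ud(F)<\delta$ such that
\begin{equation*}
\log|g(z)|\gtrsim-\Big(1+\log\tfrac{1}{\delta}\Big)\,\mathscr{L}(r,\infty,g)\,\log^{m'}\mathscr{L}(r,\infty,g),\qquad |z|=r\not\in F.
\end{equation*}
Applied to a suitably normalised $A_0$ together with $\mathscr{L}(r,\infty,A_0)=O(\log r+\log T(r))$, this gives $\mathscr{L}(r,0,A_0)=O\big(\log T(r)\cdot\log^{m'}\log T(r)\big)$ outside a set of upper density $<1$. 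This is a bit weaker than your target $O(\log T(r))$, but still closes the argument: along a sequence avoiding the (density $<1$) exceptional set you get $T(r_k,f)\lesssim\log T(r_k)\cdot\log^{m'}\log T(r_k)$, hence via~\eqref{assu-Th1} $\log^{m-1}T(r_k)\lesssim\log^{m'}\log T(r_k)$, which is impossible as $T(r_k)\to\infty$ because $m>1$. Do note the bookkeeping of exceptional sets is not optional here: you are combining a finite-measure set (log-derivative lemma), another finite-measure set (coefficient estimate), and a positive-density set from the minimum-modulus lemma, so you must choose $\delta$ small enough that the union has upper linear density $<1$; otherwise the desired sequence $r_k$ may not exist. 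In the paper this is handled through the stronger Theorem~\ref{th-3-BP}, from which Theorem~\ref{th-1-BP} follows.
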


\begin{remark}
It is noted in \cite[p.~1932]{BP} (or p.~1372 in the translation) that if the assumption \eqref{assu-Th1} is relaxed to
    \begin{equation}\label{alternative-assu}
    \limsup_{r\to\infty}\frac{\log T(r)\log ^{2+\varepsilon}T(r,f)}{T(r,f)}<\infty,
    \end{equation}
where $\varepsilon>0$ is arbitrary,
then the conclusion of Theorem~\ref{th-1-BP} remains valid. No further details about \eqref{alternative-assu} are given in \cite{BP}, though.
\end{remark} 
 
\begin{example}\label{example}
As discussed in \cite{BP} and originally observed by M.~Frei, the functions $f_1(z)=1+e^z$ and $f_2(z)=\exp\big(z+e^{-z}\big)$ are linearly independent solutions of
	$$
	f''+e^{-z}f'-f=0.
	$$
Since $\delta_P(1,f_1)=\pi$ and $\delta_N(1,f_1)=1$, the solution $f_1$ is neither P-standard nor N-standard. Moreover, since 
	$$
	T(r)\asymp T\big(r,\exp\big(e^z\big)\big)\sim \frac{e^r}{\sqrt{2\pi^3 r}}\quad\textnormal{and}\quad T(r,f_1)\sim r,
	$$ 
it follows that the exponent $m>1$ in \eqref{assu-Th1} cannot be replaced \mbox{with $m=1$.} Similarly, we see that the logarithmic term $\log ^{2+\varepsilon}T(r,f)$ in \eqref{alternative-assu} cannot be dropped, although we will later prove that the exponent $2+\varepsilon$ is not sharp. Note also that \eqref{Wittich-assumption} is not valid for $f=f_1$.
\end{example}

New results on standard solutions of \eqref{lden} are stated and discussed in Sections~\ref{NP-sec} and \ref{E-sec} below. Lemmas for the proofs are given in Section~\ref{lemmas-sec}, while the actual proofs can be found in Section~\ref{proofs-sec}. Section~\ref{concluding-sec} contains concluding remarks about Valiron deficient values.

%
%

\section{New results involving $T(r,f)$}\label{NP-sec}

Theorem~\ref{thefirst-mainthm} below shows that the $m$ solutions in Theorem~\ref{asymptotic-cor} are in fact P-standard. Thus the condition \eqref{2LM2} allows us to construct examples equations \eqref{lden} for which every nontrivial solution is a P-standard solution.

\begin{theorem}\label{thefirst-mainthm}
Under the assumptions of Theorem~\ref{asymptotic-cor}, every solution base of \eqref{lden} has $m\geq n-p$ P-standard solutions $f$ for which
\eqref{result-th1.3} holds.
\end{theorem}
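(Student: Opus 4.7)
The plan is to verify the hypothesis of Theorem~\ref{th-1-BP}: it suffices to show that each of the $m\ge n-p$ solutions $f$ produced by Theorem~\ref{asymptotic-cor} satisfies
\[
\limsup_{r\to\infty}\frac{\log^\mu T(r)}{T(r,f)}<\infty
\]
for some $\mu>1$, where $T(r)$ is the equation's characteristic function in \eqref{characteristic-function}. Given the two-sided bound $\log T(r,f)\asymp\log M(r,A_p)$ from \eqref{result-th1.3}, the crux is to control $\log T(r)$ from above by $\log M(r,A_p)$ as well.

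For the upper bound on $T(r)$, the definition \eqref{characteristic-function} combined with the elementary estimate $\log\sqrt{1+\sum_k|f_k|^2}\le\sum_k\log^+|f_k|+O(1)$ gives $T(r)\le\sum_{k=1}^n T(r,f_k)+O(1)$, reducing the task to bounding $T(r,g)$ for each solution $g$ in a chosen basis. The condition \eqref{2LM2} directly subordinates $A_{p+1},\ldots,A_{n-1}$ to $A_p$, while the minimality of $p$ together with monotonicity of $M(r,\cdot)$ forces $\log M(r,A_j)=O(\log M(r,A_p))$ also for $j<p$, for otherwise \eqref{2LM2} would already hold at a smaller index. Feeding this into standard growth estimates for solutions of linear ODEs with entire coefficients yields
\[
\log T(r,g)\le C\log M(r,A_p)+O(\log r),\quad r\notin E,
\]
for every solution $g$, and hence the same bound for $\log T(r)$.

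Combining with the lower estimate $\log T(r,f)\ge c\log M(r,A_p)$ from \eqref{result-th1.3} and using $\log r=o(\log T(r,f))$ (since $A_p$ is transcendental entire), we obtain $\log T(r)\le C'\log T(r,f)$ for $r\notin E$. Because $T(r,f)$ grows faster than any polynomial in $r$, we have $\log^\mu T(r,f)=o(T(r,f))$ for every $\mu>1$, and thus $\log^\mu T(r)/T(r,f)\to 0$ as $r\to\infty$ through the complement of $E$. To promote this exceptional-set estimate to the unrestricted $\limsup$ demanded by \eqref{assu-Th1}, I would invoke monotonicity of $T(r)$ together with a Borel-type regularity lemma: for each large $r$ choose $r^*\in[r,r+1]\setminus E$ and use $T(r^*,f)=(1+o(1))T(r,f)$ outside a further set of finite linear measure to replace $r$ by $r^*$ without essential loss. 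Theorem~\ref{th-1-BP} then delivers the P-standard property of $f$. The main obstacle I anticipate is the uniform upper bound $\log T(r,g)=O(\log M(r,A_p))$ across the entire basis: \eqref{2LM2} controls only indices $j>p$, and the corresponding control of $A_0,\ldots,A_{p-1}$ must be extracted from the minimality of $p$ together with the structural properties of the equation, rather than from \eqref{2LM2} alone.
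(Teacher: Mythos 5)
Your overall strategy matches the paper at a high level (bound $\log T(r)$ above by $\log M(r,A_p)$ and combine with \eqref{result-th1.3}), but you route through Theorem~\ref{th-1-BP} while the paper applies the new Theorem~\ref{th-3-BP}, and this is not an innocuous swap: the growth comparison $\log T(r)\lesssim \log T(r,f)$ is only obtained for $r$ outside an exceptional set $E$ of finite linear measure inherited from \eqref{result-th1.3}, whereas Theorem~\ref{th-1-BP} requires the \emph{unrestricted} $\limsup$ in \eqref{assu-Th1}. Your proposal to remove $E$ by picking $r^*\in[r,r+1]\setminus E$ and invoking $T(r^*,f)=(1+o(1))T(r,f)$ outside a further exceptional set does not close the gap: the needed regularity $T(r^*,f)\lesssim T(r,f)$ for a transcendental $f$ is itself a Borel-type statement valid only outside yet another exceptional set, so the circularity persists and the $\limsup$ over all $r$ is not controlled. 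The paper avoids this difficulty entirely by first establishing Theorem~\ref{th-3-BP}, whose hypothesis \eqref{assu-Th3} explicitly tolerates an exceptional set with $\ud(E)<1$, and then deriving the present theorem as a corollary of it.

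There is a second, more local gap: you attempt to justify $\log M(r,A_j)\lesssim\log M(r,A_p)$ for $j<p$ by arguing "otherwise \eqref{2LM2} would already hold at a smaller index," but this is not a complete argument. Failure of the bound at a single $q<p$ only says $\log M(r,A_q)$ outgrows $\log M(r,A_p)$ along some sequence; it does not make $A_q$ dominate \emph{all} of $A_{q+1},\ldots,A_{n-1}$ uniformly, which is what is needed to make the limsup in \eqref{2LM2} at index $q$ fall below $1$. The paper does not re-derive this comparison at all; it quotes it (as \eqref{Canada1}) directly from the course of proof of \cite[Theorem~2.3]{HYZ}, together with the estimate \eqref{Canada2} and the explicit upper bound $T(r)\lesssim r\sum_j M(r,A_j)^{1/(n-j)}$ from \cite[Corollary~5.3]{HKR}. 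To repair your proposal you should cite these inputs rather than try to re-prove \eqref{Canada1} from the minimality of $p$, and you should replace Theorem~\ref{th-1-BP} by Theorem~\ref{th-3-BP} to absorb the exceptional set.
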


\begin{remark}
The paper \cite{HYZ} contains more results in the spirit of Theorem~\ref{asymptotic-cor}. Using these results, more results in the spirit of Theorem~\ref{thefirst-mainthm} can be created. To avoid unnecessary repetition and to control the length of this paper, these discussions have been omitted.
\end{remark}

\medskip
The proof of Theorem~\ref{th-1-BP} in \cite{BP} is based on profound results in value distribution theory proved by Petrenko himself in his earlier papers.  Using much simpler methods, we are able to obtain an improvement of Theorem~\ref{th-1-BP}, which also improves \eqref{alternative-assu}. The main contribution of the following theorem, however, is to offer a proof that is simpler than that of Theorem~\ref{th-1-BP} in \cite{BP}.

\begin{theorem}\label{th-3-BP}
Suppose that the coefficients $A_0,\ldots, A_{n-1}$  in \eqref{lden} are entire, and that $T(r)$ is a characteristic function of \eqref{lden}. If $f$ is a solution of \eqref{lden} for which
    \begin{equation}\label{assu-Th3}
    \log T(r) \cdot \log^m \big(\log T(r)\big)=o(T(r,f)),\quad r\to\infty,\ r\not\in E, 
    \end{equation}
is valid for some  $m>1$, where $E\subset [0,\infty)$ satisfies $\ud(E)<1$, then $f$ is a P-standard solution of \eqref{lden}.
\end{theorem}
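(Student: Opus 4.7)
For each fixed $a \in \C \setminus \{0\}$, the plan is to prove $\delta_P(a, f) = 0$ by establishing a pointwise bound of the form
\[
\mathscr{L}(r, a, f) \leq C \log T(r) \bigl(\log \log T(r)\bigr)^{m} + O(\log r)
\]
valid for $r$ outside an exceptional set $E'$ with $\ud(E') < 1 - \ud(E)$. Since $\ud(E \cup E') < 1$, there is an unbounded sequence $r_j$ outside $E \cup E'$, and on it the hypothesis \eqref{assu-Th3} forces $\mathscr{L}(r_j, a, f)/T(r_j, f) \to 0$, whence $\delta_P(a, f) = 0$. As $a$ was arbitrary, $f$ is P-standard.

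The natural starting point is the substitution $g := f - a$, which is entire. From \eqref{lden} together with $f^{(k)} = g^{(k)}$ for $k \geq 1$ and $A_0 f = A_0 g + a A_0$, we obtain the non-homogeneous equation
\[
g^{(n)} + A_{n-1} g^{(n-1)} + \cdots + A_1 g' + A_0 g = -a A_0.
\]
Dividing by $g$ and setting $A_n := 1$ yields the identity
\[
-\frac{a A_0}{f - a} = A_0 + \sum_{k=1}^{n} A_k \,\frac{g^{(k)}}{g},
\]
and applying $\log^+$ produces the pointwise inequality
\[
\log^+\!\frac{1}{|f(z)-a|} \leq \log^+\!\frac{1}{|A_0(z)|} + \sum_{k=1}^{n}\log^+|A_k(z)| + \sum_{k=1}^{n}\log^+\!\left|\frac{g^{(k)}(z)}{g(z)}\right| + O(1).
\]

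Taking $\max_{|z|=r}$ and invoking the auxiliary lemmas prepared in Section~\ref{lemmas-sec}, three groups of terms must be controlled, each outside an exceptional set of small upper density:
\textbf{(a)} $\log M(r, A_k)$, handled at the scale $\log T(r)$ via the standard inequalities tying the coefficients of \eqref{lden} to its characteristic (solutions grow much faster than their coefficients, so $\log^+ M(r, A_k) = O(\log T(r))$);
\textbf{(b)} $\max_{|z|=r}\log^+ |g^{(k)}(z)/g(z)|$, handled via a max-modulus version of the logarithmic derivative lemma applied to the entire function $g$, delivering a bound of scale $\log T(r,g)\bigl(\log\log T(r,g)\bigr)^{m-1}$;
\textbf{(c)} $\max_{|z|=r}\log^+ |A_0(z)|^{-1}$, handled via a Cartan-type min-modulus estimate for the entire coefficient $A_0$ at the matching scale. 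Using the trivial inequalities $T(r, g) \leq T(r, f) + O(1) \leq T(r) + O(1)$ and summing the three contributions yields the target bound above.

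The main obstacle is delivering (b) and (c) with the sharp $(\log\log T(r))^{m-1}$ factor while keeping the upper density of the combined exceptional set strictly below $1 - \ud(E)$. This demands a max-modulus logarithmic derivative estimate that is genuinely sharper than classical Gundersen-style bounds (since the latter typically produce an extraneous $\log^{\alpha} T$ factor), a Cartan-type min-modulus lemma for $A_0$ at the same scale, and careful additive bookkeeping of upper densities so that the various exceptional sets plus $E$ do not fill up $[0,\infty)$. Once the lemmas in Section~\ref{lemmas-sec} are in place with the appropriate density control, the remaining argument is the clean extraction performed in the first paragraph.
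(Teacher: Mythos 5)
Your high-level plan is the same as the paper's: after translating \eqref{lden} into the pointwise identity
\[
-\frac{aA_0}{f-a}=A_0+\sum_{k=1}^{n}A_k\,\frac{(f-a)^{(k)}}{f-a},
\]
bound the three groups of terms (coefficient moduli, logarithmic derivatives, minimum modulus of $A_0$) and do density bookkeeping. The paper executes this by showing that the hypothesis \eqref{assu-Th3}, together with Lemmas~\ref{FHP-lemma} and \ref{minimum-modulus-lemma}, produces precisely the hypotheses \eqref{ass-A_i} and \eqref{ass-A_0} of Theorem~\ref{P-4.3} outside a set of upper density $<1$, and then invokes the proof of Theorem~\ref{P-4.3}. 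So the strategy matches.

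However, there is a genuine misconception in your itemization. You claim that both (b) and (c) must be delivered at the scale $\log T(r)\,(\log\log T(r))^{m-1}$, and that (b) therefore needs a logarithmic derivative estimate ``genuinely sharper than classical Gundersen-style bounds.'' This is not the case. The paper's Lemma~\ref{LD-lemma} gives the perfectly ordinary bound
\[
\log^{+}\left|\frac{(f-a)^{(k)}(z)}{(f-a)(z)}\right|\lesssim\log T(r,f)+\log r,\quad |z|=r\notin E_1,
\]
and since $f$ is transcendental, $\log T(r,f)+\log r=o(T(r,f))$ unconditionally; no iterated-log refinement is needed for the log-derivative terms. The $\log T(r)\cdot\log^{m}(\log T(r))$ shape of hypothesis \eqref{assu-Th3} is forced \emph{only} by part (c): Lemma~\ref{minimum-modulus-lemma} gives $\mathscr{L}(r,0,A_0)=O\bigl(\mathscr{L}(r,\infty,A_0)\log^{m}\mathscr{L}(r,\infty,A_0)\bigr)$ outside a set of small upper density, and feeding $\mathscr{L}(r,\infty,A_0)=O(\log T(r))$ into this produces the $\log T(r)\,\log^{m}(\log T(r))$ term.

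A second point you omit is that the simplification $\mathscr{L}(r,\infty,A_0)=O(\log T(r))$ (rather than $O(\log r+\log T(r))$) is not free: it requires working on a set $G=\{r:T(r)\geq r^{\alpha}\}$ of full upper density, whose existence the paper gets from Frei's theorem (Lemma~\ref{Frei-thm}), stating that $T(r)$ has infinite order once at least one coefficient is transcendental, together with a density lemma from \cite{LHZ}. This is also why the paper first disposes separately of the case where every coefficient is a polynomial (via Corollary~\ref{polycoeff-cor}). Once you replace your concern about (b) with this point about (c) and the role of $G$, the proposal aligns with the paper's proof.
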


\begin{remark}
If the exponent $2+\varepsilon$ in \eqref{alternative-assu} is replaced with $1+\varepsilon$, we have
	$$
	\log T(r)=O\left(\frac{T(r,f)}{\log^{1+\varepsilon}T(r,f)}\right),
	$$
which in turn implies, for $m=1+\varepsilon/2$,
	$$
	\log T(r) \cdot \log^m \big(\log T(r)\big)
	=O\left(\frac{T(r,f)}{\log^{\varepsilon/2} T(r,f)}\right) = o(T(r,f)). 
	$$
The sharpness of \eqref{assu-Th3} is not known. However, \eqref{assu-Th3} is a milder assumption than \eqref{assu-Th1}, which in turn is relatively sharp by Example~\ref{example}.
\end{remark}

\medskip
We turn our attention to finding admissibility conditions in the spirit of \eqref{Wittich-assumption} for the solutions of \eqref{lden} to be P-standard.

\begin{theorem}\label{P-4.3}
Suppose that the coefficients $A_0,\ldots, A_{n-1}$ in \eqref{lden} are meromorphic, and that a meromorphic solution $f$ of \eqref{lden} satisfies
    \begin{eqnarray}
    \mathscr{L}(r,\infty,A_j) &=& o(T(r,f)),\quad r\not\in E,\, j=0,\ldots,n-1,\label{ass-A_i}\\
    \mathscr{L}(r,0,A_0) &=& o(T(r,f)),\quad r\not\in E,\label{ass-A_0}
    \end{eqnarray}
where $E\subset[0,\infty)$ satisfies $\ud (E)<1$.   
Then $0$ is the only possible finite Petrenko deviated value for $f$, i.e., $f$ is a P-standard solution.
\end{theorem}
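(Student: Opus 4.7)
The plan is to adapt the argument for Wittich's Theorem~\ref{N-standard-thm} to the pointwise (maximum modulus) framework of $\mathscr{L}$. Fix $a\in\C\setminus\{0\}$ and set $g:=f-a$, so that $T(r,g)=T(r,f)+O(1)$ and $g^{(k)}=f^{(k)}$ for $k\geq 1$. Substituting $f=g+a$ into \eqref{lden} gives
\[
aA_0(z)=-g^{(n)}(z)-\sum_{k=1}^{n-1}A_k(z)\,g^{(k)}(z)-A_0(z)\,g(z),
\]
and dividing by $g(z)$ yields, wherever $A_0(z)\neq 0$,
\[
\frac{|a|}{|f(z)-a|}\leq 1+\frac{1}{|A_0(z)|}\left(\left|\frac{g^{(n)}(z)}{g(z)}\right|+\sum_{k=1}^{n-1}|A_k(z)|\left|\frac{g^{(k)}(z)}{g(z)}\right|\right).
\]
Applying $\log^+$ and using its standard subadditivity produces the pointwise bound
\[
\log^+\frac{1}{|f(z)-a|}\leq \log^+\frac{1}{|A_0(z)|}+\sum_{k=1}^{n-1}\log^+|A_k(z)|+\sum_{k=1}^{n}\log^+\left|\frac{g^{(k)}(z)}{g(z)}\right|+O(1).
\]

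Next I would take the maximum over $|z|=r$ and obtain
\[
\mathscr{L}(r,a,f)\leq \mathscr{L}(r,0,A_0)+\sum_{k=1}^{n-1}\mathscr{L}(r,\infty,A_k)+\sum_{k=1}^{n}\max_{|z|=r}\log^+\left|\frac{g^{(k)}(z)}{g(z)}\right|+O(1).
\]
By hypotheses \eqref{ass-A_i}--\eqref{ass-A_0}, the first two summands are $o(T(r,f))$ on $[0,\infty)\setminus E$. These hypotheses also force $T(r,f)\to\infty$ (since $\mathscr{L}(r,0,A_0)$ is not identically zero when $A_0\not\equiv 0$), so $f$, and hence $g$, is transcendental.

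For the logarithmic derivative terms I would invoke a Gol'dberg--Grinstein type maximum modulus estimate (presumably among the lemmas in Section~\ref{lemmas-sec}) of the form
\[
\max_{|z|=r}\log^+\left|\frac{g^{(k)}(z)}{g(z)}\right|=O\!\left(\log\bigl(r\,T(r,g)\bigr)\right),\qquad r\not\in F,
\]
where $F\subset[0,\infty)$ has finite logarithmic measure, so that $\ud(F)=0$. Since $g$ is transcendental, $\log(rT(r,g))=o(T(r,g))=o(T(r,f))$, and hence the third summand is $o(T(r,f))$ for $r\notin F$. Putting everything together, $\mathscr{L}(r,a,f)=o(T(r,f))$ as $r\to\infty$ along $[0,\infty)\setminus(E\cup F)$. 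Because $\ud(E\cup F)\leq \ud(E)+\ud(F)<1$, this complement is unbounded, giving $\delta_P(a,f)=\liminf_{r\to\infty}\mathscr{L}(r,a,f)/T(r,f)=0$ for every $a\in\C\setminus\{0\}$.

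The main obstacle is securing the pointwise logarithmic derivative estimate outside an exceptional set compatible with $E$. The classical Nevanlinna lemma controls the averaged quantity $m(r,g^{(k)}/g)$ but not $\max_{|z|=r}\log^+|g^{(k)}/g|$, so one must appeal to a stronger maximum modulus (Gol'dberg--Grinstein) version and carefully verify that its exceptional set has upper density strictly less than $1-\ud(E)$. The rest of the argument is essentially bookkeeping around the substitution $g=f-a$.
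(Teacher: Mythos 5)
Your proposal is correct and follows essentially the same route as the paper. You isolate $1/(f-a)$ from the ODE, apply $\log^+$, take the maximum over $|z|=r$, and invoke a pointwise (Gol'dberg--Grinstein style) logarithmic-derivative estimate outside a small exceptional set; the paper does exactly this, using its Lemma~\ref{LD-lemma} (drawn from \cite{HYZ}), which gives $\log^+|g^{(k)}(z)/g(z)|\lesssim\log T(r,g)+\log r$ for $|z|=r$ outside a set of finite linear measure, and then closes the argument with the same upper-density bookkeeping.
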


Theorem~\ref{P-4.3} has the following consequence.

\begin{corollary}\label{polycoeff-cor}
Suppose that the coefficients $A_0,\ldots, A_{n-1}$ in \eqref{lden} are polynomials, and that $f$ is a non-trivial solution of \eqref{lden}. Then $f$ is a P-standard solution (and consequently an N-standard solution).
\end{corollary}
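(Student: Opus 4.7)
The plan is to invoke Theorem~\ref{P-4.3}, exploiting the fact that polynomial coefficients give only logarithmic growth in $\mathscr{L}(r,\infty,A_j)$ and $\mathscr{L}(r,0,A_0)$, which is negligible compared to $T(r,f)$ for any transcendental solution. Since the leading coefficient in \eqref{lden} is $1$ and each $A_j$ is a polynomial (in particular entire), every solution of \eqref{lden} is entire; the argument then splits naturally into two cases according to whether $f$ is a polynomial or transcendental.

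In the polynomial case, with $\deg f = d \geq 1$, one can argue directly without Theorem~\ref{P-4.3}: for every $a \in \C$ the polynomial $f - a$ has degree $d$, so $\min_{|z|=r}|f(z)-a|\to\infty$ as $r\to\infty$. Hence $\mathscr{L}(r,a,f)=0$ for all sufficiently large $r$, giving $\delta_P(a,f)=0$, and $f$ is trivially P-standard.

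In the transcendental case, I would verify \eqref{ass-A_i} and \eqref{ass-A_0} with $E=\emptyset$. Writing $d_j=\deg A_j$, the estimate $M(r,A_j)=O(r^{d_j})$ immediately yields $\mathscr{L}(r,\infty,A_j)=O(\log r)$. For $A_0$, assuming $A_0\not\equiv 0$ (otherwise $g=f'$ satisfies a lower-order equation of the same form and one reduces inductively), the standard lower bound $|A_0(z)|\geq\tfrac{1}{2}|a_{d_0}|r^{d_0}$ for $|z|=r$ sufficiently large gives $\mathscr{L}(r,0,A_0)=O(\log r)$. Finally, since $f$ is transcendental meromorphic in $\C$, the classical fact $T(r,f)/\log r\to\infty$ yields $\log r = o(T(r,f))$, so both hypotheses of Theorem~\ref{P-4.3} hold with $E=\emptyset$, and the conclusion follows.

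I do not foresee any substantive obstacle; the corollary is essentially immediate from Theorem~\ref{P-4.3} once one observes the $O(\log r)$ growth of the relevant logarithmic maxima. The only minor care needed is to separate out the polynomial-solution case, where $T(r,f)\asymp\log r$ and Theorem~\ref{P-4.3} does not apply, but which is handled by the direct argument above.
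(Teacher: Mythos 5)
Your approach matches the paper's: split into the polynomial and transcendental cases, and in the latter apply Theorem~\ref{P-4.3} with empty exceptional set. The detailed verification that $\mathscr{L}(r,\infty,A_j)=O(\log r)$ and $\mathscr{L}(r,0,A_0)=O(\log r)$ for nonzero polynomial coefficients, combined with $\log r=o(T(r,f))$ for transcendental $f$, is correct and spells out what the paper states only briefly.

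The one genuinely problematic point is your parenthetical handling of $A_0\equiv0$. Passing to $g=f'$ and ``reducing inductively'' does not close the argument, because P-standardness of $f'$ does not imply P-standardness of $f$. In fact the corollary is false as written if $A_0\equiv0$ is allowed: the function $f(z)=1+e^z$ solves $f''-f'=0$ (polynomial coefficients, $A_0\equiv0$), its derivative $g(z)=e^z$ is P-standard, but $\delta_P(1,f)=\pi>0$ by the paper's Example~\ref{example}, so $f$ is not a P-standard solution. The hypothesis $A_0\not\equiv0$ must therefore be treated as implicit. Note that the paper's own proof also tacitly requires this, since when $A_0\equiv0$ one has $\mathscr{L}(r,0,A_0)\equiv+\infty$ and the hypothesis \eqref{ass-A_0} of Theorem~\ref{P-4.3} cannot hold; but the paper simply does not flag the issue, whereas you flagged it and then offered a repair that does not work. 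The correct remark is that $A_0\not\equiv0$ is a standing assumption, not something one can induct past.
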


\begin{proof}
If $f$ is a polynomial, then clearly $\delta_P(a,f)=0$ for every $a\in\C$. Hence we may
suppose that $f$ is transcendental. But now the estimates in \eqref{ass-A_i} and \eqref{ass-A_0} are valid without an exceptional set. Thus $f$ is a P-standard solution by Theorem~\ref{P-4.3}.
\end{proof} 

\begin{example}\label{example2}
The function $f(z)=e^{2z}+1$ solves
	$$
	f''+A_1(z)f'+A_0(z)f=0,
	$$
where $A_0(z)=-2P(z)e^z$, $A_1(z)=P(z)e^z+P(z)e^{-z}-2$ and $P(z)$ is an arbitrary polynomial. Clearly
	$$
	\delta_P(1,f)\geq \delta_N(1,f)=1>0,
	$$
so that $f$ is neither a P-standard nor an N-standard solution. Since
	$$
	T(r,A_0)\asymp T(r,A_1)\asymp T(r,f)\asymp r,
	$$
	$$
	\mathscr{L}(r,0,A_0)\asymp \mathscr{L}(r,\infty,A_0)\asymp \mathscr{L}(r,\infty,A_1)
	\asymp r,
	$$
we see that $o(T(r,f))$ cannot be replaced with $O(T(r,f))$ in \mbox{\eqref{Wittich-assumption}, \eqref{ass-A_i}, \eqref{ass-A_0}.} 
\end{example}

An assumption analogous to \eqref{ass-A_0} for the characteristic function is not needed in  Theorem~\ref{N-standard-thm} because of the First Main Theorem. However, the assumption \eqref{ass-A_0} about $\mathscr{L}(r,0,A_0)$ can be replaced with an assumption about $\mathscr{L}(r,\infty,A_0)$ when $A_0$ is entire.

\begin{theorem}\label{alternative-thm}
The conclusion of Theorem~\ref{P-4.3} remains valid if $A_0$ is entire and, for some $m>1$, the assumption \eqref{ass-A_0} is replaced with
	\begin{equation}\label{ass-A_0B}
	\mathscr{L}(r,\infty,A_0)\cdot \log^m \mathscr{L}(r,\infty,A_0)= o(T(r,f)),\quad r\not\in E.
	\end{equation}
\end{theorem}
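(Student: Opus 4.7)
The plan is to deduce Theorem~\ref{alternative-thm} directly from Theorem~\ref{P-4.3} by showing that, when $A_0$ is entire, hypothesis \eqref{ass-A_0B} already forces hypothesis \eqref{ass-A_0} to hold outside an enlarged exceptional set still of upper linear density strictly less than one. No new argument about the differential equation itself will be required; the whole proof reduces to a function-theoretic comparison between $\mathscr{L}(r,0,A_0)$ and $\mathscr{L}(r,\infty,A_0)$.

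Since $A_0$ is entire, the definitions give
\[
\mathscr{L}(r, \infty, A_0) = \log^+ M(r, A_0), \qquad \mathscr{L}(r, 0, A_0) = \log^+ \frac{1}{\min_{|z|=r}|A_0(z)|}.
\]
The key ingredient I would invoke is a minimum modulus estimate for entire functions of the following type: for each $m>1$ there is a constant $C>0$ and an exceptional set $E^*\subset[0,\infty)$ of finite logarithmic measure such that, for every entire $g$ and every $r\notin E^*$,
\[
\log^+ \frac{1}{\min_{|z|=r}|g(z)|} \;\leq\; C\cdot \log^+ M(r, g)\cdot \log^m\log^+ M(r,g) + O(1).
\]
This is a classical Cartan/Valiron-type estimate on the small values of an entire function, and I expect it to appear as (or follow easily from) one of the lemmas collected in Section~\ref{lemmas-sec}.

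Applied to $g=A_0$, this yields
$\mathscr{L}(r,0,A_0)\leq C\,\mathscr{L}(r,\infty,A_0)\log^m \mathscr{L}(r,\infty,A_0) + O(1)$
for $r\notin E^*$. Combining with \eqref{ass-A_0B} gives $\mathscr{L}(r,0,A_0)=o(T(r,f))$ for $r\notin E\cup E^*$. A routine computation shows that any set of finite logarithmic measure has upper linear density zero, so $\ud(E\cup E^*)\leq \ud(E)+\ud(E^*)=\ud(E)<1$. Both hypotheses \eqref{ass-A_i} and \eqref{ass-A_0} of Theorem~\ref{P-4.3} are now in force with the exceptional set $E\cup E^*$, and Theorem~\ref{P-4.3} delivers the desired conclusion that $f$ is a P-standard solution.

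The one non-routine step, and the main obstacle, is the minimum modulus estimate. The factor $\log^m \log M$ with $m>1$ is precisely what such estimates can guarantee outside a set of finite logarithmic measure, which explains why \eqref{ass-A_0B} takes exactly this shape and why the threshold $m>1$ appears, paralleling the role of $m>1$ in Theorem~\ref{th-3-BP}. Once the estimate is in hand with the stated exceptional set, everything else is bookkeeping.
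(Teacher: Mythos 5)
Your reduction to Theorem~\ref{P-4.3} via a minimum-modulus estimate for $A_0$ is exactly the route the paper takes; the estimate you are hoping for is Lemma~\ref{minimum-modulus-lemma}, which (for transcendental entire $g$ with $g(0)=1$, $\delta>0$ and $m>1$) gives a set $F\subset[0,\infty)$ with $\ud(F)<\delta$ such that
\[
\log|g(z)|\gtrsim -\Bigl(1+\log\tfrac{1}{\delta}\Bigr)\,\mathscr{L}(r,\infty,g)\cdot\log^m\mathscr{L}(r,\infty,g),\quad |z|=r\notin F.
\]
However, your claim that the exceptional set has \emph{finite logarithmic measure} is too strong and is, in general, false. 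The Borel-type lemma (Lemma~\ref{B-lemma}) that trades $\alpha r$ for $r$ does yield a finite-log-measure set, but the second half of the minimum-modulus argument is Cartan's lemma, whose excluded circular projection is controlled only in upper linear density: for each $\eta>0$ one discards a set of density at most $8\eta$, with the price that the implied constant grows like $\log(1/\eta)$. There is no way to push the Cartan part down to finite log measure, since the bad intervals (around radii passing near clusters of zeros) have widths comparable to $r$. Hence $\ud(E^*)=0$ cannot be asserted, and your bookkeeping step $\ud(E\cup E^*)\leq\ud(E)+\ud(E^*)=\ud(E)$ does not go through as written.

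The repair is short and is what the paper does: choose $\delta>0$ with $\ud(E)+\delta<1$, apply Lemma~\ref{minimum-modulus-lemma} with that $\delta$ to obtain $F$ with $\ud(F)<\delta$, and conclude $\ud(E\cup F)\leq\ud(E)+\ud(F)<1$, so Theorem~\ref{P-4.3} applies. Two further small items you omit: Lemma~\ref{minimum-modulus-lemma} requires $g(0)=1$, so when $A_0(0)\neq 1$ one passes to $B_0(z)=Cz^kA_0(z)$ with $B_0(0)=1$ and notes $\mathscr{L}(r,0,B_0)=(1+o(1))\mathscr{L}(r,0,A_0)$; and the case of polynomial $A_0$ is handled separately since then $\mathscr{L}(r,0,A_0)=O(1)$ directly. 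These are routine, but the density/log-measure confusion is a genuine misstatement of the key lemma that would have stopped you had you tried to prove or locate it.
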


%
%

\section{New results involving $A(r,f)$}\label{E-sec} 
 
The results in the previous section are stated in terms of the characteristic function $T(r,f)$, where $f$ is either an entire function or meromorphic in $\C$. In this section we will discuss analogous results stated in terms of the function
	$$
	A(r,f)=\frac{1}{\pi}\int_{D(0,r)} f^\#(z)^2\, dm(z),
	$$
which is the normalized area of the image of the disc $D(0,r)$ on the Riemann sphere under $f$. Here $f^\#(z)=|f'(z)|/(1+|f(z)|^2)$ is the spherical derivative of $f$, and $dm(z)=sdsd\theta$ for $z=se^{i\theta}$ is the standard Euclidean area measure.

It is known that the functions $T(r,f)$ and $A(r,f)$ are connected. To see this, first recall that the Ahlfors-Shimizu characteristic given by
	$$
	T_0(r,f)=\int_0^r \frac{A(t,f)}{t}\, dt
	$$
satisfies  
	\begin{equation}\label{T-T0}
	T_0(r,f)=T(r,f)+O(1),
	\end{equation}
see \cite[p.~13]{Hayman} or \cite[p.~67]{Zheng}. It is easy to see that, for $C>1$,
	\begin{equation}\label{T0-A}
	\begin{split}
	T_0(r,f)&=\int_1^r \frac{A(t,f)}{t}\, dt+\int_0^1 \frac{A(t,f)}{t}\, dt
	\leq A(r,f)\log r+O(1),
	\end{split}	
	\end{equation}
	\begin{equation}\label{A-T0}
	A(r,f)=\frac{A(r,f)}{\log C}\int_r^{Cr}\frac{dt}{t}\leq \frac{1}{\log C}\int_r^{Cr}\frac{A(t,f)}{t}\, dt\leq \frac{T_0(Cr,f)}{\log C},
	\end{equation}
where we have used the fact that $A(r,f)$ is a non-decreasing function of $r$. For better estimates involving exceptional sets, see \cite[Lemma~2.4.2]{Zheng}.

In 1997, Eremenko \cite{Eremenko} introduced a quantity
	$$
	\delta_E(a,f)=\liminf_{r\to\infty}\frac{\mathscr{L}(r,a,f)}{A(r,f)},\quad a\in\widehat{\C}.
	$$
It is clear that $\delta_E(a,f)\geq 0$,
and it follows from an earlier theorem of Bergweiler and Bock \cite{BB} that if $\rho(f)\geq 1/2$, then $\delta_E(a,f)\leq \pi$. If $\delta_E(a,f)>0$, then $a$ is called a \emph{Eremenko deviated value} for $f$. Given a meromorphic function $f$, it is known \cite{Eremenko} that the set of E-deviated values for $f$ is at most countable and either consist of one point $a$ for which $\delta_E(a,f)>2\pi$ or
	\begin{equation}\label{E-def-rel}
	\sum_{a\in\widehat{\C}}\delta_E(a,f)\leq 2\pi.
	\end{equation}

We say that solution $f$ of \eqref{lden} satisfying  $\delta_E(a,f)=0$ for every $a\in\mathbb{C}\setminus\{0\}$ is an \emph{E-standard solution}. Note that the relationship between $\delta_E(a,f)$ and the quantities $\delta_N(a,f)$ and $\delta_P(a,f)$ is nontrivial \cite{Marchenko}.

\medskip
\begin{remark}
The proofs of the results in Section~\ref{NP-sec} use the fact that
	\begin{equation}\label{transcendental}
	\lim_{r\to\infty}\frac{T(r,f)}{\log r}=\infty
	\end{equation}
for any transcendental entire function $f$ \cite[p.~11]{YY}. In results involving E-standard solutions, \eqref{transcendental} should hold with $A(r,f)$ in place of $T(r,f)$. However, this is not always true. Indeed, it is known \cite{Clunie} that there are entire functions $f$ satisfying
	$$
	T_0(r,f)\sim T(r,f)\sim \log^2r,
	$$
in which case $A(r,f)\asymp \log r$ by \eqref{T0-A} and the following modification of \eqref{A-T0}:
	$$
	A(r,f)=\frac{A(r,f)}{\log r}\int_r^{r^2}\frac{dt}{t}\leq\frac{T_0(r^2,f)}{\log r}\sim 4\log r.
	$$
\end{remark}

\medskip
With the previous remark in mind, Lemma~\ref{A-lemma} below may be of independent interest. It partially relies on the concept of \emph{logarithmic order} \cite{Chern} of $f$ defined by
	 $$
	 \rho_{\log}(f):=\limsup_{r\to\infty}\frac{\log T(r,f)}{\log\log r}.
	 $$
If $f$ is a rational function, then $\rho_{\log}(f)=1$, while transcendental entire functions $g$ satisfying $\rho_{\log}(g)=1$ are known to exist \cite{Chern}. If a meromorphic function $h$ satisfies $\rho_{\log}(h)<1$, then $h$ is a constant function and $\rho_{\log}(h)=0$.

\begin{lemma}\label{A-lemma}
If $f$ is a transcendental meromorphic function, then $A(r,f)$ is an unbounded function of $r$ such that the following assertions hold.
\begin{itemize}
\item[\textnormal{(a)}]  If $\rho(f)>\alpha>0$, then the set
	$$
	H_1=\big\{r\geq 0: A(r,f)\geq r^\alpha\big\}
	$$
satisfies $\ud(H_1)=1$.
\item[\textnormal{(b)}] If $\rho_{\log}(f)>\alpha>2$, then the set
	$$
	H_2=\big\{r\geq 1: A(r,f)\geq \log^{\alpha-1} r\big\}
	$$
satisfies $\uld(H_2)=1$.
\end{itemize}
\end{lemma}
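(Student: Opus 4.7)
The plan rests on combining the identity \eqref{T-T0} with the elementary estimate \eqref{T0-A} into a single master inequality:
$$
T(r,f) \;\leq\; A(r,f)\log r + O(1), \qquad r\geq 1. \qquad(\ast)
$$
Unboundedness of $A(r,f)$ is then immediate: a bound $A(r,f)\leq M$ would force $T(r,f)=O(\log r)$, hence $f$ rational, contradicting the transcendence hypothesis.

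For part~(a), fix $\beta\in(\alpha,\rho(f))$; by the definition of $\rho(f)$ there is a sequence $r_k\to\infty$ with $T(r_k,f)\geq r_k^{\beta}$. Plugging this into $(\ast)$ yields $A(r_k,f)\geq r_k^{\beta}/\log r_k-O(1/\log r_k)$, which exceeds $r_k^{\alpha}$ for all large $k$ since $\beta>\alpha$. The key step is to upgrade this pointwise bound to an interval estimate using the monotonicity of $A(\cdot,f)$: set $R_k:=A(r_k,f)^{1/\alpha}$, so that $R_k\geq r_k^{\beta/\alpha}/(2\log r_k)^{1/\alpha}$ and in particular $R_k/r_k\to\infty$. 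Monotonicity then gives
$$
A(r,f)\;\geq\;A(r_k,f)\;=\;R_k^{\alpha}\;\geq\;r^{\alpha}\quad\text{for } r\in[r_k,R_k],
$$
so $[r_k,R_k]\subset H_1$ and $|H_1\cap[0,R_k]|/R_k\geq 1-r_k/R_k\to 1$, yielding $\ud(H_1)=1$.

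Part~(b) runs the same argument on a logarithmic scale. Pick $\beta\in(\alpha,\rho_{\log}(f))$ and $r_k\to\infty$ with $T(r_k,f)\geq(\log r_k)^{\beta}$. Then $(\ast)$ gives $A(r_k,f)\geq\tfrac12(\log r_k)^{\beta-1}$ for large $k$. Define $R_k$ by $\log R_k=2^{-1/(\alpha-1)}(\log r_k)^{(\beta-1)/(\alpha-1)}$, so that $(\log R_k)^{\alpha-1}=A(r_k,f)$. Since $\alpha>2$ ensures $\alpha-1>1$ and $\beta>\alpha$ gives $(\beta-1)/(\alpha-1)>1$, we have $\log R_k/\log r_k\to\infty$. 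Monotonicity of $A(\cdot,f)$ again yields $[r_k,R_k]\subset H_2$, and
$$
\frac{\int_{H_2\cap[1,R_k]}\,dt/t}{\log R_k}\;\geq\;\frac{\log R_k-\log r_k}{\log R_k}\;\longrightarrow\;1
$$
forces $\uld(H_2)=1$.

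The only real obstacle is the calibration of $R_k$: it must be large enough for the density (resp.\ logarithmic density) contribution of $[r_k,R_k]$ to tend to $1$, yet small enough for the monotone bound $A(r,f)\geq A(r_k,f)$ to still dominate $r^{\alpha}$ (resp.\ $(\log r)^{\alpha-1}$) on the whole of $[r_k,R_k]$. The slack $\beta-\alpha>0$ in the choice of $\beta$ is exactly what makes both requirements simultaneously achievable; once this trade-off is set up correctly, both conclusions drop out of a routine density computation.
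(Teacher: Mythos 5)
Your proposal is correct, and it takes a genuinely different route from the paper's proof. The paper reduces the statement to a black-box density lemma about nondecreasing functions: it introduces the auxiliary sets $H_1^*=\{r:T(r,f)\geq 2r^\alpha\log r\}$ and $H_2^*=\{r:T(r,f)\geq 2\log^\alpha r\}$, observes via \eqref{T-T0} and \eqref{T0-A} that these are (modulo bounded sets) contained in $H_1$ and $H_2$, and then invokes \cite[Corollary~3.3]{HLWZ} with $\psi(r)=\log r$, resp.~$\psi(r)=\log\log r$, to get the density conclusions for $H_1^*$, $H_2^*$. You instead extract a single sequence $r_k$ along which $T$ is large, push it through $(\ast)$ to make $A(r_k,f)$ large, and then \emph{directly} exploit the monotonicity of $A(\cdot,f)$ to stretch the pointwise bound over an interval $[r_k,R_k]$ with $R_k/r_k\to\infty$ (resp.~$\log R_k/\log r_k\to\infty$); the density computation then becomes a one-line limit. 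Your calibration of $R_k$ is the right one and the slack $\beta>\alpha$ is used exactly where it must be. The trade-off: your argument is self-contained and elementary (no external density lemma needed, only the monotonicity of $A$), at the cost of a slightly longer write-up; the paper's version is shorter once the \cite{HLWZ} result is accepted. One minor slip in part~(b): with your choice of $R_k$ one has $(\log R_k)^{\alpha-1}=\tfrac12(\log r_k)^{\beta-1}\leq A(r_k,f)$, not equality, but this is the inequality the monotonicity argument actually needs, so nothing is affected.
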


\begin{proof}
If $A(r,f)$ is bounded, then $T_0(r,f)=O(\log r)$, in which case $f$ is rational, and we have a contradiction. To prove (a), we define
	$$
	H_1^*=\big\{r\geq 1 : T(r,f)\geq 2r^\alpha\log r\big\},
	$$
which is essentially a subset of $H_1$ (modulo a bounded set) by \eqref{T-T0} and \eqref{T0-A}. Using \cite[Corollary~3.3]{HLWZ} with $\psi(r)=\log r$, we find that $\ud(H_1^*)=1$, and consequently $\ud(H_1)=1$. To prove (b), we define
	$$
	H_2^*=\big\{r\geq 1: T(r,f)\geq 2\log^{\alpha} r\big\},
	$$
which is essentially a subset of $H_2$. Similarly as above, using \cite[Corollary~3.3]{HLWZ} with $\psi(r)=\log\log r$, the assertion follows.
\end{proof}

Due to Lemma~\ref{A-lemma} and the remark preceding it, we have to pay attention to the validity of $\log r=o(A(r,f))$ when proving analogues of the results in Sections~\ref{Back-sec} and \ref{NP-sec}. We begin with an analogue of \mbox{Theorem~\ref{N-standard-thm}.} 

\begin{theorem}\label{E-analogue}
Suppose that the coefficients $A_0,\ldots, A_{n-1}$ in \eqref{lden} are meromorphic, and that $f$ is a meromorphic solution of \eqref{lden} such that 
    \begin{equation}\label{Eremenko-assumption}
    T(r,A_j)=o(A(r,f)),\quad r\not\in E,\, j=0,\ldots,n-1,
    \end{equation}
where $E\subset[0,\infty)$ satisfies $\ud(E)<1$. Then $0$ is the only possible finite E-deviated value for $f$, i.e., $f$ is an E-standard solution.
\end{theorem}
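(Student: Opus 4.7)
The plan is to run a Wittich-type substitution and then replace every occurrence of $T$ by $A$ by means of Poisson--Jensen-style estimates controlling $\mathscr{L}$. Fix $a\in\mathbb{C}\setminus\{0\}$ and set $g=f-a$; then $g^{(k)}=f^{(k)}$ for $k\geq1$, and substituting into \eqref{lden} gives
$$g^{(n)}+\sum_{j=1}^{n-1}A_j g^{(j)}+A_0 g=-aA_0.$$
Dividing by $g=f-a$ produces the identity
$$\frac{1}{f-a}=-\frac{1}{aA_0}\left(\frac{(f-a)^{(n)}}{f-a}+\sum_{j=1}^{n-1}A_j\frac{(f-a)^{(j)}}{f-a}\right)-\frac{1}{a}.$$
Taking $\log^+$ of moduli and then the maximum over $|z|=r$ yields
$$\mathscr{L}(r,a,f)\leq \mathscr{L}(r,0,A_0)+\sum_{j=1}^{n-1}\mathscr{L}(r,\infty,A_j)+\sum_{j=1}^{n}M_j(r)+O(1),$$
where $M_j(r):=\max_{|z|=r}\log^+|(f-a)^{(j)}(z)/(f-a)(z)|$.

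The next step converts the right-hand side to characteristic functions. For any meromorphic $h$, any $b\in\widehat{\mathbb{C}}$ and any $\sigma>1$, a Poisson--Jensen-type bound gives $\mathscr{L}(r,b,h)=O(T(\sigma r,h))$; applying this to $h=A_j$ and to $h=1/A_0$ (using $T(r,1/A_0)=T(r,A_0)+O(1)$ by the First Main Theorem) bounds the first two sums. For the $M_j(r)$ I would appeal to the $\mathscr{L}$-version of the lemma on the logarithmic derivative (a result of the type to be proved in Section~\ref{lemmas-sec}, analogous to what is used for Theorem~\ref{P-4.3}), yielding $M_j(r)=O\bigl(\log^+ T(\sigma r,f)+\log r\bigr)$ outside an exceptional set of upper linear density less than $1$. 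Altogether,
$$\mathscr{L}(r,a,f)=O\!\left(\sum_{j=0}^{n-1}T(\sigma r,A_j)+\log^+ T(\sigma r,f)+\log r\right),\quad r\notin E_1,$$
with $\ud(E_1)<1$.

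The remaining work is asymptotic bookkeeping. A Borel-type growth lemma, combined with the freedom to take $\sigma>1$ arbitrarily close to $1$, lets me replace $T(\sigma r,\cdot)$ by $T(r,\cdot)$ at the cost of enlarging the exceptional set to some $E_2\supset E$ still with $\ud(E_2)<1$. The hypothesis $T(r,A_j)=o(A(r,f))$ then dispatches the first sum. The bound $T(r,f)\leq A(r,f)\log r+O(1)$ from \eqref{T-T0}--\eqref{T0-A} gives $\log^+ T(r,f)=O(\log A(r,f)+\log\log r)$, which is $o(A(r,f))$ since $A(r,f)\to\infty$ for transcendental $f$ by Lemma~\ref{A-lemma}. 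For the residual $O(\log r)$ term I invoke Lemma~\ref{A-lemma}: if $\rho(f)>0$, part (a) supplies a set of upper density $1$ on which $\log r=o(A(r,f))$, and otherwise part (b) (with a lower-order analysis using the hypothesis) produces such a set. Intersecting this set with the complement of $E_2$ yields a sequence $r_n\to\infty$ along which $\mathscr{L}(r_n,a,f)/A(r_n,f)\to0$, giving $\delta_E(a,f)=0$. The rational case is trivial ($A_j$ and $f$ are rational, and $\mathscr{L}(r,a,f)$ is bounded for $a\ne f(\infty)$).

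I expect the most delicate step to be controlling the interplay between the three exceptional sets arising above: the original set from the hypothesis, the set produced by the Borel-type shift from $r$ to $\sigma r$, and the set produced by the $\mathscr{L}$-analogue of the logarithmic derivative lemma. Keeping the total exceptional set of upper density strictly below $1$, while still being able to extract $r_n$ along which $\log r_n=o(A(r_n,f))$, is where the freedom in choosing $\sigma$ close to $1$ and the precise form of Lemma~\ref{A-lemma} will have to be exploited carefully.
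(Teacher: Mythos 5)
Your approach differs in two structural ways from the paper's, and one of them leaves a real gap.

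\textbf{How the paper argues.} The paper splits into two cases according to whether some coefficient $A_j$ is non-constant. In the first case it observes that a non-constant meromorphic $A_j$ automatically satisfies $T(r,A_j)\gtrsim\log r$, so the hypothesis $T(r,A_j)=o(A(r,f))$ immediately gives $\log r=o(A(r,f))$ outside $E$; combining this with \eqref{T-T0}--\eqref{T0-A}, Lemma~\ref{LD-lemma} becomes the estimate $\log^+|f^{(k)}/f^{(j)}|=o(A(r,f))$ on a set of positive upper density, and the rest is a Wittich/P-4.3-style run-through. In the second case (all $A_j$ constant) the hypothesis is vacuous and gives no information on $A(r,f)$; the paper then uses the explicit structure of exponential-polynomial solutions, Steinmetz's results to get $\delta_V(a,f)=0$, and Marchenko's inequality $\delta_E\le\pi\sqrt{\delta_V(2-\delta_V)}$ for lower order $1$ to conclude $\delta_E(a,f)=0$.

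\textbf{Where your argument has a gap.} You try to obtain $\log r=o(A(r,f))$ from Lemma~\ref{A-lemma} alone, splitting on $\rho(f)>0$ versus $\rho(f)=0$ and in the latter case invoking part (b). But part (b) requires $\rho_{\log}(f)>2$; it says nothing when $\rho(f)=0$ and $\rho_{\log}(f)\le 2$. This is not a hypothetical worry: the paper's own remark before Lemma~\ref{A-lemma} exhibits an entire $f$ with $T(r,f)\sim\log^2 r$ and $A(r,f)\asymp\log r$, for which $\log r=o(A(r,f))$ fails on every set of positive density. The correct way out is exactly the paper's: if some $A_j$ is non-constant, the hypothesis itself forces $\log r=o(A(r,f))$, with no need for Lemma~\ref{A-lemma}; if all $A_j$ are constant, you are in a separate regime (transcendental exponential-polynomial solutions of order $1$, or polynomial solutions) where the hypothesis carries no information and an independent argument, such as the paper's appeal to Marchenko, is needed. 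Your closing line "\emph{the rational case is trivial ($A_j$ and $f$ are rational)}" misidentifies the exceptional case: the delicate situation is constant coefficients with a \emph{transcendental} exponential-polynomial solution, not a rational $f$. A smaller issue is your reliance on the estimate $\mathscr{L}(r,b,h)=O(T(\sigma r,h))$ for \emph{meromorphic} $h$; this fails near radii passing through poles/zeros, so it needs an exceptional set and some care (and such care is not free when you are already juggling three exceptional sets of positive density).

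\textbf{What each approach buys.} Your route of converting the hypothesis $T(r,A_j)=o(A(r,f))$ into $\mathscr{L}$-bounds via Poisson--Jensen and then shifting the radius by a Borel-type lemma is more explicit than what the paper writes and, if the meromorphic Poisson--Jensen step were carefully sourced, would make the dependence on exceptional sets transparent. The paper's route is slicker in the non-constant case (no Poisson--Jensen, no radius shift, no Lemma~\ref{A-lemma}) but pays for it by needing the entirely separate constant-coefficient case relying on Steinmetz and Marchenko. If you repair the $\log r=o(A(r,f))$ step by deriving it from the hypothesis when some $A_j$ is non-constant, and add the constant-coefficient case (where Lemma~\ref{A-lemma}(a) with $\rho(f)=1$ does apply), your outline becomes a legitimate alternative proof.
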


Theorem~\ref{P-4.3} has the following analogue, where $o(T(r,f))$ is being replaced with $o(A(r,f))$. 

\begin{theorem}\label{E1}
Suppose that the coefficients $A_0,\ldots, A_{n-1}$ in \eqref{lden} are meromorphic, and that a meromorphic solution $f$ of \eqref{lden} satisfies $\rho_{\log}(f)>2$ and
    \begin{eqnarray}
    \mathscr{L}(r,\infty,A_j) &=& o(A(r,f)),\quad r\not\in E,\; j=0,\ldots,n-1,\label{ass-A_i2}\\
    \mathscr{L}(r,0,A_0) &=& o(A(r,f)),\quad r\not\in E,\label{ass-A_02}
    \end{eqnarray}
where $E\subset[0,\infty)$ satisfies $\ud(E)<1$.   
Then $0$ is the only possible finite E-deviated value for $f$, i.e., $f$ is an E-standard solution.
\end{theorem}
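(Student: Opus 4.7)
The plan is to model the argument on the proof of Theorem~\ref{P-4.3}, everywhere replacing $T(r,f)$ by $A(r,f)$, and to use the new hypothesis $\rho_{\log}(f)>2$ together with Lemma~\ref{A-lemma}(b) to absorb the logarithmic-in-$r$ error terms that would otherwise be incomparable with $A(r,f)$.

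Fix $a\in\C\setminus\{0\}$ and set $g:=f-a$, so that $g^{(k)}=f^{(k)}$ for $k\geq 1$. Substituting into \eqref{lden} yields the inhomogeneous equation
\[
g^{(n)}+A_{n-1}g^{(n-1)}+\cdots+A_1g'+A_0g=-aA_0.
\]
Dividing through by $g$ and isolating $1/g=(f-a)^{-1}$ gives the pointwise estimate
\[
\log^+\frac{1}{|g(z)|}\leq C\Bigl(\log^+\frac{1}{|A_0(z)|}+\sum_{j=1}^{n-1}\log^+|A_j(z)|+\sum_{k=1}^{n}\log^+\Bigl|\frac{g^{(k)}(z)}{g(z)}\Bigr|\Bigr)+O(1)
\]
for some $C=C(n,a)>0$. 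Taking the maximum over $|z|=r$ and using $\mathscr{L}(r,a,f)=\mathscr{L}(r,0,g)$, this becomes
\[
\mathscr{L}(r,a,f)\leq C\Bigl(\mathscr{L}(r,0,A_0)+\sum_{j=1}^{n-1}\mathscr{L}(r,\infty,A_j)+\sum_{k=1}^{n}\max_{|z|=r}\log^+\Bigl|\frac{g^{(k)}(z)}{g(z)}\Bigr|\Bigr)+O(1).
\]
Hypotheses \eqref{ass-A_i2}--\eqref{ass-A_02} render the first two sums $o(A(r,f))$ off a set $E$ with $\ud(E)<1$.

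To treat the logarithmic-derivative terms I would invoke a max-modulus version of the logarithmic-derivative lemma of the form
\[
\max_{|z|=r}\log^+\Bigl|\frac{g^{(k)}(z)}{g(z)}\Bigr|=O\bigl(\log T(r,g)+\log r\bigr)
\]
outside an exceptional set of upper density less than one (such a statement is expected among the lemmas in Section~\ref{lemmas-sec}). Since $T(r,g)=T(r,f)+O(1)$ and $T(r,f)\leq A(r,f)\log r+O(1)$ by \eqref{T0-A}, these terms are dominated by $\log A(r,f)+\log\log r+\log r$ up to constants. The role of $\rho_{\log}(f)>2$ is precisely to control the last expression: by Lemma~\ref{A-lemma}(b) there exists $\alpha>2$ such that the set $H_2=\{r\geq 1:A(r,f)\geq\log^{\alpha-1}r\}$ satisfies $\uld(H_2)=1$, and since $\alpha-1>1$ we have $\log r=o(A(r,f))$ and \emph{a fortiori} $\log\log r=o(A(r,f))$ and $\log A(r,f)=o(A(r,f))$ as $r\to\infty$ along $H_2$.

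Intersecting $H_2$ with the complement of the (finitely many) exceptional sets above leaves an unbounded set along which every term on the right-hand side of the bound for $\mathscr{L}(r,a,f)$ is $o(A(r,f))$; consequently $\liminf_{r\to\infty}\mathscr{L}(r,a,f)/A(r,f)=0$, i.e.\ $\delta_E(a,f)=0$, as required. The main obstacle is bookkeeping the exceptional sets: the hypotheses supply a set of upper \emph{linear} density less than one, while Lemma~\ref{A-lemma}(b) supplies a set of upper \emph{logarithmic} density one, and any Borel-type enlargement $r\mapsto Rr$ hidden in the logarithmic-derivative lemma must also be accommodated. Verifying that these three sets can be combined so as to leave an unbounded set of admissible radii, and that the implicit growth in the logarithmic-derivative estimate does not overrun the cushion provided by $\rho_{\log}(f)>2$, is the delicate part of the argument.
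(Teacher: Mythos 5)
Your outline is the same as the paper's: replace $T(r,f)$ by $A(r,f)$ in the proof of Theorem~\ref{P-4.3}, translate Lemma~\ref{LD-lemma} into a bound in terms of $A(r,f)$ via \eqref{T-T0}--\eqref{T0-A}, and use $\rho_{\log}(f)>2$ with Lemma~\ref{A-lemma}(b) to make $\log r=o(A(r,f))$ on a large set. The one piece you flag but do not close is precisely the step the paper does carry out, and it is easier than you suggest. The density accounting goes through because of the general inequality $\uld(E)\leq\ud(E)$ (an integration by parts on $\int_1^r\chi_E(t)\,dt/t$ shows this). Hence $\uld(H_2)=1$ from Lemma~\ref{A-lemma}(b) already forces $\ud(H_2)=1$. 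Once both $H_2$ and the bad sets are measured in the same (linear) density, the standard subadditivity estimate gives, with $E_1$ of finite linear measure (so $\ud(E_1)=0$) and $\ud(E)<1$,
\[
\ud\big(H_2\setminus(E\cup E_1)\big)\;\geq\;\ud(H_2)-\ud(E\cup E_1)\;=\;1-\ud(E)\;>\;0,
\]
so the surviving set $F=H_2\setminus(E\cup E_1)$ has positive upper linear density, which is more than enough for the $\liminf$ conclusion. There is no ``Borel-type enlargement'' to worry about: Lemma~\ref{LD-lemma} is already a pointwise max-modulus estimate off a set of finite linear measure, so no scaling of $r$ intervenes. With these observations your argument becomes complete and coincides with the paper's proof; the extra $\log\log r$ term you pick up from bounding $\log T(r,f)$ by $\log A(r,f)+\log\log r$ is harmless since it is dominated by the $\log r$ term that $H_2$ already controls.
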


\begin{remark}
(1) The solution $f$ in Example~\ref{example2} is not an E-standard solution. Hence Example~\ref{example2} can be used to illustrate that  $o(A(r,f))$ cannot be replaced with $O(A(r,f))$ in \eqref{Eremenko-assumption}, \eqref{ass-A_i2}, \eqref{ass-A_02}.

(2) If the coefficients $A_0,\ldots,A_{n-1}$ are entire, then the technical assumption $\rho_{\log}(f)>2$ in Theorem~\ref{E1} can be omitted. Indeed, the proof could then be handled in two cases: (i) At least one of the coefficients is non-constant, or (ii) all coefficients are constant functions. See the proof of Theorem~\ref{E-analogue} in Section~\ref{proofs-sec} below for an analogous reasoning.
\end{remark}

\medskip
The next result is an analogue of Corollary~\ref{polycoeff-cor} for E-standard solutions.

\begin{corollary}\label{polycoeff-cor2}
Suppose that the coefficients $A_0,\ldots, A_{n-1}$ in \eqref{lden} are polynomials, and that $f$ is a non-trivial solution of \eqref{lden}. Then $f$ is an E-standard solution.
\end{corollary}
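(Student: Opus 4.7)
The plan is to adapt the proof of Corollary~\ref{polycoeff-cor}, substituting Theorem~\ref{E1} for Theorem~\ref{P-4.3} and invoking the Remark after Theorem~\ref{E1} to drop the hypothesis $\rho_{\log}(f)>2$ (which is permissible here since the coefficients are entire). First I would dispose of the trivial case: if $f$ is a polynomial, then for every finite $a$ we have $|f(z)-a|\to\infty$ uniformly on $|z|=r$ as $r\to\infty$, so $\mathscr{L}(r,a,f)\to 0$, while $A(r,f)\to \deg f$. Hence $\delta_E(a,f)=0$ for every $a\in\C$, and $f$ is trivially an E-standard solution.

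For the transcendental case I would verify the remaining hypotheses of Theorem~\ref{E1}. Each $A_j$ is a polynomial, so $\mathscr{L}(r,\infty,A_j)=O(\log r)$, and $\mathscr{L}(r,0,A_0)=0$ for $r$ sufficiently large (as $A_0$ has no zeros outside a large disc; the degenerate case $A_0\equiv 0$ is reduced to a lower-order equation via the substitution $g=f'$). By the classical theory of linear differential equations with polynomial coefficients, every transcendental entire solution $f$ has positive finite order $\rho(f)$, whence $\rho_{\log}(f)=\infty>2$. Applying Lemma~\ref{A-lemma}(a) with any $\alpha\in(0,\rho(f))$ then yields $A(r,f)\geq r^\alpha$ on a set of upper linear density $1$, so that $\log r=o(A(r,f))$ off an exceptional set of the required density; the estimates \eqref{ass-A_i2} and \eqref{ass-A_02} follow, and Theorem~\ref{E1} concludes that $f$ is E-standard.

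The main obstacle is the density comparison: Lemma~\ref{A-lemma}(a) only gives $\ud(H_1)=1$, which does not automatically imply $\ud(H_1^c)<1$, which Theorem~\ref{E1} requires of its exceptional set. This gap is bridged by the case split sketched in the Remark after Theorem~\ref{E1}: when all $A_j$ are constants, $f$ is a linear combination of terms $p_j(z)e^{\alpha_j z}$ of order exactly $1$, whose regular growth yields $\log r=o(A(r,f))$ without any exceptional set; when at least one $A_j$ is non-constant, the monotonicity of $A(r,f)$ combined with the abundance of points in $H_1$ permits enlarging $H_1$ into a set of positive lower linear density, producing an exceptional set whose upper linear density is strictly less than $1$.
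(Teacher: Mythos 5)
Your overall structure matches the paper's: dispose of the polynomial case (where $A(r,f)$ stays bounded away from zero while $\mathscr{L}(r,a,f)\to 0$), and for transcendental $f$ obtain a lower bound on the order and apply Lemma~\ref{A-lemma}(a). The paper cites $\rho(f)\geq 1/(n-1)$ from the reference \cite{GSW}, whereas you invoke the general fact $\rho(f)\in(0,\infty)$; either suffices. You also correctly identify the crux: $\ud(H_1)=1$ does \emph{not} imply $\ud(H_1^c)<1$, so Theorem~\ref{E1} cannot be applied as stated.

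However, your bridge across this gap is wrong. The claim that monotonicity of $A(r,f)$ together with $\ud(H_1)=1$ lets one enlarge $H_1$ to a set of positive lower linear density is false: take $H_1^c=\bigcup_n[a_n,b_n]$ with $a_n\ll b_n\ll a_{n+1}$ and $b_n/a_n\to\infty$ slowly; monotonicity only forces $A(r,f)\geq a_n^\alpha$ on $[a_n,b_n]$, which is perfectly consistent with $A(r,f)<r^\alpha$ there, and $\ld(H_1)=0$ while $\ud(H_1)=1$. The paper does not attempt any such enlargement, nor does it invoke Theorem~\ref{E1}. The fix — which is what the paper means by ``a careful inspection of the proof of Theorem~\ref{P-4.3}'' — is that the $\liminf$ defining $\delta_E(a,f)$ requires the estimate $\mathscr{L}(r,a,f)=o(A(r,f))$ only along \emph{some} unbounded sequence of radii, not outside a set of upper density $<1$. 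Since $\ud(H_1)=1$ and the exceptional set $E_1$ from Lemma~\ref{LD-lemma} has $\ud(E_1)=0$, the set $H_1\setminus E_1$ still has upper density $1$, hence is unbounded; one then simply reruns the $\delta_E$-analogue of the proof of Theorem~\ref{P-4.3} on this set. Your case split (constant vs.\ non-constant coefficients) is likewise unnecessary here; the order bound handles all transcendental solutions uniformly. Your remark about reducing the case $A_0\equiv 0$ via $g=f'$ addresses a degeneracy the paper silently assumes away, but it is orthogonal to the main issue.
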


\begin{proof}
Suppose first that $f$ is a polynomial. Then, since $T_0(r,f)\asymp \log r$, we see from \eqref{T0-A} that $A(r,f)$ is bounded away from zero. Clearly, for every $a\in\C$, we have $\mathscr{L}(r,a,f)\to 0$ as $r\to\infty$, so that $\delta_E(a,f)=0$. 

We now suppose that $f$ is transcendental. Then it is known that $\rho(f)\geq 1/(n-1)$ \cite{GSW}. Let $\alpha\in (0,1/(n-1))$, and let $H_1$ be the set in Lemma~\ref{A-lemma}(a). Since the coefficients are polynomials, it follows that the estimates in \eqref{ass-A_i2} and \eqref{ass-A_02} are valid for all $r\in H_1$ (as opposed to for all $r\not\in E$). The assertion follows by a careful inspection of the proof of Theorem~\ref{P-4.3}.
\end{proof}

Further analogues of the results in Sections~\ref{Back-sec} and~\ref{NP-sec} for E-standard solutions can be obtained by replacing $o(T(r,f))$ in the assumptions with $o(A(r,f))$, modulo minor technical adjustments. The details are omitted.

%
%
 
\section{Lemmas}\label{lemmas-sec}

In this section we discuss lemmas which are either new or non-trivial modifications of existing results, or which need to be clarified to the reader in some way. The proofs of the main results also rely on lemmas that can be found directly from the literature. Such lemmas will not be stated here.  

The following generalization of Borel's lemma is the key to most of the crucial estimates in this paper.

\begin{lemma}[{\cite[Lemma~3.3.1]{CY}}]\label{B-lemma}
Let $F(r)$ and $\phi(r)$ be a positive, nondecreasing and continuous functions defined for $r_0\leq r<\infty$, and assume that $F(r)\geq e$ for $r\geq r_0$. Let $\xi(r)$ be a positive, nondecreasing and continuous functions defined for $e\leq r<\infty$. Finally, let $C>1$ be a constant, and let $E\subset [r_0,\infty)$ be defined by
	$$
	E=\left\{r\geq r_0 : F\left(r+\frac{\phi(r)}{\xi(F(r))}\right)\geq CF(r)\right\}.
	$$
Then, for all $R\in (r_0,\infty)$,
	\begin{equation}\label{B-assertion}
	\int_{E\cap [r_0,R]}\frac{dr}{\phi(r)}
	\leq \frac{1}{\xi(e)}+\frac{1}{\log C}\int_e^{F(R)}\frac{dx}{x\xi(x)}.
	\end{equation}
\end{lemma}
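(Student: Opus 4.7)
The plan is to use the classical iteration/covering argument for Borel-type lemmas: cover $E\cap[r_0,R]$ by a pairwise disjoint sequence of short intervals on each of which $F$ grows by at least a factor $C$, so that the number of such intervals is controlled by $\log F(R)/\log C$.

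First I would construct the sequence. Put $r_1=\inf(E\cap[r_0,R])$; if $E\cap[r_0,R]=\emptyset$ the left-hand side of \eqref{B-assertion} is zero and there is nothing to prove. Given $r_n$, set
$$
t_n=r_n+\frac{\phi(r_n)}{\xi(F(r_n))},\qquad r_{n+1}=\inf\{r\in E\cap[r_0,R] : r\geq t_n\},
$$
stopping if the set is empty. Since each $r_n\in E$, the defining property of $E$ gives $F(t_n)\geq CF(r_n)$, and $r_{n+1}\geq t_n$ together with $F$ nondecreasing yields $F(r_{n+1})\geq CF(r_n)$. Iterating,
$$
F(r_n)\geq C^{n-1}F(r_1)\geq C^{n-1}e,
$$
so since $F(r_n)\leq F(R)<\infty$ (we may assume $F(R)$ finite, as otherwise the right-hand side of \eqref{B-assertion} is vacuously $\infty$) the sequence is finite, with some length $N$.

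Next I would verify that the intervals $I_n:=[r_n,t_n]$ are pairwise disjoint and cover $E\cap[r_0,R]$. Disjointness is immediate from $r_{n+1}\geq t_n$. For the cover, given $r\in E\cap[r_0,R]$, let $n$ be the largest index with $r_n\leq r$; if $r\geq t_n$, then $r$ lies in the set defining $r_{n+1}$, forcing $r\geq r_{n+1}$, contradicting maximality. (If the sequence terminates at $r_N$, the same argument shows $r<t_N$.) Hence $r\in I_n$.

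With the cover in hand, the bound is a direct computation. Since $\phi$ is nondecreasing, on $I_n$ we have $\phi(r)\geq\phi(r_n)$, so
$$
\int_{I_n}\frac{dr}{\phi(r)}\leq\frac{t_n-r_n}{\phi(r_n)}=\frac{1}{\xi(F(r_n))}.
$$
Summing and splitting off the first term (bounded by $1/\xi(e)$ since $F(r_1)\geq e$ and $\xi$ is nondecreasing), I would estimate the remaining terms by noting that for $n\geq 2$, using $\xi$ nondecreasing and $F(r_n)\geq CF(r_{n-1})$,
$$
\int_{F(r_{n-1})}^{F(r_n)}\frac{dx}{x\,\xi(x)}\geq\frac{1}{\xi(F(r_n))}\log\frac{F(r_n)}{F(r_{n-1})}\geq\frac{\log C}{\xi(F(r_n))}.
$$
Telescoping gives
$$
\sum_{n=2}^{N}\frac{1}{\xi(F(r_n))}\leq\frac{1}{\log C}\int_{F(r_1)}^{F(r_N)}\frac{dx}{x\,\xi(x)}\leq\frac{1}{\log C}\int_{e}^{F(R)}\frac{dx}{x\,\xi(x)},
$$
and combining with the $n=1$ term yields \eqref{B-assertion}.

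The main obstacle is the bookkeeping in Step~2, namely verifying that the inductively defined intervals really do cover $E\cap[r_0,R]$ (and handling the possibility that the sequence terminates before exhausting the range, or never starts). Once the cover claim is in place the rest is a short calculation.
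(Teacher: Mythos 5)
Your proof is correct and is essentially the standard Borel-type covering argument that Cherry--Ye use for \cite[Lemma~3.3.1]{CY}; the paper does not reprove the lemma but relies on exactly the interval-covering structure you construct (see the remark following the lemma, where $E\subset\cup_\nu[r_\nu,s_\nu]$). The one point worth making explicit is that your construction repeatedly asserts $r_n\in E$, which requires the infima to be attained; this holds because $E$ is closed, being the preimage of $[0,\infty)$ under the continuous map $r\mapsto F\bigl(r+\phi(r)/\xi(F(r))\bigr)-CF(r)$. Also, the remark that ``we may assume $F(R)$ finite'' is unnecessary since $F$ is a real-valued function by hypothesis, so $F(R)<\infty$ automatically.
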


\medskip
\begin{remark}
It follows from the proof of Lemma~\ref{B-lemma} in \cite{CY} that if the set $E$ is unbounded, then the function $F(r)$ must be unbounded. Moreover, the set $E$ is contained in a union of closed intervals
	\begin{equation}\label{F}
	E\subset \cup_{\nu=1}^\infty [r_\nu,s_\nu]=:F.
	\end{equation}
If $\xi$ is chosen such that 
	\begin{equation}\label{xi}
	\int_e^\infty \frac{dx}{x\xi(x)}<\infty,
	\end{equation}
then we may let $R\to\infty$ in \eqref{B-assertion}, and find, again by the proof of Lemma~\ref{B-lemma} in \cite{CY}, that
	\begin{equation}\label{EF-finite-meas}
	\int_E\frac{dr}{\phi(r)}\leq \int_F\frac{dr}{\phi(r)}\leq \sum_{\nu=1}^\infty
	\int_{r_\nu}^{s_\nu}\frac{dr}{\phi(r)}<\infty.
	\end{equation}
In the special cases when $\phi(r)\equiv 1$ or $\phi(r)=r$, the set $E$ has finite linear/logarithmic measure, respectively. The sharpest choice for $\xi$ satisfying \eqref{xi} is the Khinchin function, for which the condition \eqref{xi} is used as part of the definition \cite[p.~49]{CY}. Another natural choice is $\xi(x)=\log^mx$ for $m>1$.
\end{remark} 

\medskip
We need an estimate for the number of zeros of an entire function. The difference to the standard estimate in  \cite[p.~15]{Levin} is that the function in the upper bound has $r$ as a variable instead of $\alpha r$, where $\alpha>1$ is a constant.

\begin{lemma}\label{n-lemma}
Let $g$ be a nonconstant entire function with $g(0)=1$,  let $n(r)$ denote the number of zeros of $g$ in $|z|\leq r$, and let $m>1$. Then
	$$
	n(r)\lesssim \mathscr{L}(r,\infty,g)\cdot \log^m \mathscr{L}(r,\infty,g),
	\quad r\not\in [0,1]\cup E,
	$$
where $E\subset(1,\infty)$ has finite logarithmic measure.
\end{lemma}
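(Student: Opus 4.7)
The starting point is the Jensen-type inequality. Since $g(0)=1$ and hence $\log|g(0)|=0$, Jensen's formula gives
$$
N(r,0,g) = \frac{1}{2\pi}\int_0^{2\pi}\log|g(re^{i\theta})|\,d\theta\leq \mathscr{L}(r,\infty,g),
$$
and the usual comparison
$$
n(r)\log\alpha \leq \int_r^{\alpha r}\frac{n(t)}{t}\,dt\leq N(\alpha r,0,g)
$$
gives, for any $\alpha>1$,
$$
n(r)\log\alpha \leq \mathscr{L}(\alpha r,\infty,g). \qquad(\ast)
$$
The whole task is therefore to choose $\alpha=\alpha(r)$ as close to $1$ as possible while still being able to estimate $\mathscr{L}(\alpha r,\infty,g)$ from above by a constant multiple of $\mathscr{L}(r,\infty,g)$ outside a small exceptional set. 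This is what Lemma~\ref{B-lemma} delivers.

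The plan is to apply Lemma~\ref{B-lemma} with $F(r)=\mathscr{L}(r,\infty,g)$, $\phi(r)=r$, $\xi(x)=\log^m x$, and some fixed $C>1$. Since $g$ is nonconstant entire, $\mathscr{L}(r,\infty,g)$ is continuous, nondecreasing, and tends to infinity, so there exists $r_0>1$ with $\mathscr{L}(r_0,\infty,g)\geq e$; the hypotheses of Lemma~\ref{B-lemma} are then met for $r\geq r_0$. The choice $\phi(r)=r$ together with $\int_e^\infty dx/(x\log^m x)<\infty$ places us in the setting of \eqref{xi}--\eqref{EF-finite-meas}, so the exceptional set
$$
E_0=\left\{r\geq r_0 : \mathscr{L}\!\left(r+\frac{r}{\log^m\mathscr{L}(r,\infty,g)},\infty,g\right)\geq C\,\mathscr{L}(r,\infty,g)\right\}
$$
has finite logarithmic measure. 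Set $E=(1,r_0]\cup E_0\subset(1,\infty)$; this still has finite logarithmic measure.

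Now fix $r\not\in[0,1]\cup E$, so $r>r_0$ and $r\not\in E_0$, and choose
$$
\alpha=1+\frac{1}{\log^m\mathscr{L}(r,\infty,g)}.
$$
Since $\mathscr{L}(r,\infty,g)\geq e$, one has $0<\alpha-1\leq 1$, hence $\log\alpha\geq (\alpha-1)\log 2\geq \tfrac{1}{2}(\alpha-1)$. Also $\alpha r\leq r+r/\log^m\mathscr{L}(r,\infty,g)$, so the definition of $E_0$ together with monotonicity of $\mathscr{L}$ gives $\mathscr{L}(\alpha r,\infty,g)\leq C\,\mathscr{L}(r,\infty,g)$. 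Plugging into $(\ast)$ yields
$$
n(r)\leq \frac{\mathscr{L}(\alpha r,\infty,g)}{\log\alpha}\leq \frac{2C\,\mathscr{L}(r,\infty,g)}{\alpha-1}=2C\,\mathscr{L}(r,\infty,g)\cdot\log^m\mathscr{L}(r,\infty,g),
$$
which is the claimed bound.

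There is no real obstacle; the only point requiring attention is the verification that Lemma~\ref{B-lemma} is applicable with the specific choice $\phi(r)=r$ (to get logarithmic, rather than linear, measure of the exceptional set) and that the coupling between the dilation factor $\alpha$ and the bound produced by the lemma is self-consistent, i.e.\ $\alpha r$ actually falls inside the increment $r+\phi(r)/\xi(F(r))$. With that, the argument essentially amounts to inserting Borel's lemma into the standard Jensen bound with a data-dependent choice of $\alpha$.
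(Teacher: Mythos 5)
Your proof is correct and follows essentially the same route as the paper: the Jensen/monotonicity bound $n(r)\log\alpha\leq\mathscr{L}(\alpha r,\infty,g)$, the data-dependent dilation $\alpha=1+1/\log^m\mathscr{L}(r,\infty,g)$ (which is exactly the paper's $R=r+r/\log^m\mathscr{L}(r,\infty,g)$), and Lemma~\ref{B-lemma} with $\phi(r)=r$, $\xi(x)=\log^m x$, $F(r)=\mathscr{L}(r,\infty,g)$ to confine the exceptional set to finite logarithmic measure. The only cosmetic difference is that you avoid the paper's separate polynomial case by noting $\mathscr{L}(r,\infty,g)\to\infty$ for every nonconstant entire $g$, which is fine.
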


\begin{proof}
If $g$ is a polynomial, then $n(r)$ is a bounded function, and the assertion is clear even
without an exceptional set. Thus we may suppose that $g$ is transcendental, or, in fact,
that $n(r)$ is an unbounded function. 

Let $1<r<\infty$, and let $R\in (r,\infty)$ be arbitrary. Analogously as in \cite[p.~15]{Levin}, we use Jensen's theorem and the monotonicity of $n(r)$ for
	\begin{eqnarray*}
	n(r) &=& \frac{n(r)}{\log (R/r)}\int_r^R\frac{dt}{t}\leq 
	\frac{1}{\log (R/r)}\int_0^R\frac{n(t)}{t}\, dt\\
	&= & \frac{1}{2\pi \log (R/r)}\int_0^{2\pi}\log^+|g(Re^{i\theta})|\, d\theta
	\leq \frac{\mathscr{L}(R,\infty,g)}{\log (R/r)}.
	\end{eqnarray*}
Since $\log x>(x-1)/2$ for $1<x<2$, we obtain
	\begin{equation}\label{n-estimate}
	n(r)\leq \frac{2r}{R-r}\cdot\mathscr{L}(R,\infty,g),
	\quad r<R<2r.
	\end{equation}
We now make a specific choice
	\begin{equation}\label{R}
	R=r+\frac{r}{\log^m \mathscr{L}(r,\infty,g)},
	\end{equation}
which satisfies $r<R<2r$ whenever $r>1$ is large
enough, say $r\geq r_0$. Since the maximum modulus $M(r,g)$ is a continuous function of $r$ \cite[p.~2]{Levin}, it follows that $\mathscr{L}(r,\infty,g)$ is continuous. The assertion now follows from \eqref{n-estimate} by applying \eqref{R} and Lemma~\ref{B-lemma} with the choices $\phi(r)=r$, $\xi(r)=\log^m r$ and $F(r)=\mathscr{L}(r,\infty,g)$. 
\end{proof}

Lemma~\ref{minimum-modulus-lemma} below is an estimate for the minimum modulus of an entire function, and
is a modification of Theorem~11 in \cite[p.~21]{Levin}. The difference to the estimate
in \cite[p.~21]{Levin} is that the function in the lower bound has $r$ as a variable instead of
$\alpha r$, where $\alpha>1$. 

\begin{lemma}\label{minimum-modulus-lemma}
Let $g$ be a transcendental entire function with $g(0)=1$, and let $\delta>0$ and $m>1$. Then there exists a set $F\subset[0,\infty)$ with $\ud(F)<\delta$ such that
	$$
	\log |g(z)|\gtrsim -\left(1+\log\frac{1}{\delta}\right)\mathscr{L}(r,\infty,g)\cdot 
	\log^m \mathscr{L}(r,\infty,g),\quad |z|=r\not\in F.
	$$
\end{lemma}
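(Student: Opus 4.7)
The plan is to follow the same route as in Lemma~\ref{n-lemma}: derive a lower bound for $\log|g(z)|$ with the radius $R>r$ as a free parameter, specialise to $R=r+r/\log^m\mathscr{L}(r,\infty,g)$, and reduce to an estimate solely in terms of $r$ via Lemmas~\ref{B-lemma} and~\ref{n-lemma}. The estimate is assembled from two classical ingredients (Poisson--Jensen and Cartan), after which the machinery already developed in this section does the rest.

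First I would invoke the Poisson--Jensen formula on $|z|<R$ for the entire function $g$ with $g(0)=1$,
$$
\log|g(z)|=\frac{1}{2\pi}\int_0^{2\pi}\mathrm{Re}\frac{Re^{i\theta}+z}{Re^{i\theta}-z}\log|g(Re^{i\theta})|\,d\theta-\sum_{|a_j|<R}\log\left|\frac{R^2-\overline{a_j}z}{R(z-a_j)}\right|,
$$
and bound the Poisson integral below by $-\frac{R+r}{R-r}\mathscr{L}(R,\infty,g)$, using $m(R,0,g)\leq T(R,g)\leq \log M(R,g)=\mathscr{L}(R,\infty,g)$ (valid for entire $g$ with $g(0)=1$). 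For the Blaschke sum I would apply Cartan's lemma (see \cite[p.~19]{Levin}) to the polynomial $\prod_{|a_j|<R}(z-a_j)$ with $H=2\eta R$; a routine computation then yields, outside a family of discs with sum of radii $\leq 4\eta R$, the upper bound
$$
\sum_{|a_j|<R}\log\left|\frac{R^2-\overline{a_j}z}{R(z-a_j)}\right|\leq n(R,0,g)\log(e/\eta).
$$
For $R=r+r/\log^m\mathscr{L}(r,\infty,g)$ one has $(R+r)/(R-r)\asymp 2\log^m\mathscr{L}(r,\infty,g)$, while Lemma~\ref{B-lemma} applied with $\phi(r)=r$, $\xi(r)=\log^m r$ and $F=\mathscr{L}(\cdot,\infty,g)$ gives $\mathscr{L}(R,\infty,g)\leq 2\mathscr{L}(r,\infty,g)$ outside a set of finite logarithmic measure, and Lemma~\ref{n-lemma} gives $n(R,0,g)\lesssim \mathscr{L}(r,\infty,g)\log^m\mathscr{L}(r,\infty,g)$ outside another such set. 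Taking $\eta$ to be a suitable positive multiple of $\delta$, so that $\log(e/\eta)\lesssim 1+\log(1/\delta)$, combining these bounds produces the desired lower bound.

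The main obstacle is the exceptional set $F$: the Cartan discs depend on $R$, and hence on $r$, so their aggregate over $r$ must be controlled in the radial direction. I would handle this via a dyadic argument --- for each $k$, apply Cartan's lemma once to all zeros in $|z|<2^{k+1}$ (which contains the zeros relevant to every $r\in[2^{k-1},2^k]$), and pass from the bound for $\prod_{|a_j|<2^{k+1}}(z-a_j)$ to the one for $\prod_{|a_j|<R(r)}(z-a_j)$ by a harmless shell product over zeros in $R(r)\leq|a_j|<2^{k+1}$. The radial projection of the resulting dyadic discs on $[2^{k-1},2^k]$ has measure $\lesssim \eta\cdot 2^k$, so the aggregate exceptional set from Cartan has upper density $\lesssim \eta$. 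Adjoining the finite-logarithmic-measure exceptional sets from Lemmas~\ref{B-lemma} and~\ref{n-lemma} (which have zero upper density) and taking $\eta$ a small multiple of $\delta$ delivers $\ud(F)<\delta$.
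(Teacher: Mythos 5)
Your overall decomposition (Poisson--Jensen for the analytic part, Cartan for the zero part) is equivalent to the paper's factorization $g=\psi\varphi$ with $\psi$ zero-free and $\varphi$ a Blaschke-type product, and your choice of $R$, use of Lemma~\ref{B-lemma}, and appeal to Lemma~\ref{n-lemma} all match the paper. The place where the proposal breaks down is the dyadic aggregation of the Cartan discs. Applying Cartan once to $P_k(z)=\prod_{|a_j|<2^{k+1}}(z-a_j)$ and then dividing by the shell product $S(z)=\prod_{R\leq|a_j|<2^{k+1}}(z-a_j)$ gives, after the arithmetic,
\begin{equation*}
\sum_{|a_j|<R}\log\left|\frac{R^2-\overline{a_j}z}{R(z-a_j)}\right|
\;\lesssim\; n(R)\log\tfrac{e}{\eta}\;+\;\bigl(n(2^{k+1})-n(R)\bigr)\log\tfrac{e}{\eta}
\;=\;n(2^{k+1})\log\tfrac{e}{\eta},
\end{equation*}
so the shell is not harmless: it replaces $n(R)$, with $R\sim r$, by $n(2^{k+1})\approx n(4r)$. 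For functions of rapid (infinite-order) growth, $n(4r)$ can dwarf $\mathscr{L}(r,\infty,g)\log^m\mathscr{L}(r,\infty,g)$ --- for example, if $n(t)\asymp e^{t}$ then $n(4r)\asymp e^{4r}$ while $\mathscr{L}(r,\infty,g)\log^m\mathscr{L}(r,\infty,g)\asymp e^{r}r^{m}$. This is precisely the regime where the lemma's refinement over Levin's Theorem~11 (replacing $\alpha r$ by $r$ in the bound) has content, so the dyadic trick destroys the very sharpness the lemma is after.

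The paper sidesteps this by applying Cartan directly to the zeros in $\overline{D}(0,R)$ with the $r$-dependent radius $R=R(r)$, accepting that the exceptional disc union $\U(R)$ moves with $R$, and then arguing that the circular projections $F(R)$ converge (as $R\to\infty$) to a limiting set $F_1$ of upper linear density $\leq 8\eta$. You also gloss over a technical point that the paper handles explicitly: the exceptional set coming from Lemma~\ref{n-lemma} lives on the $R$-axis, and one must show that its pre-image $E_3=\{r:R(r)\in E_2\}$ under $r\mapsto R(r)$ still has finite logarithmic measure; the paper proves this via the interval covering \eqref{F}, the bound \eqref{EF-finite-meas}, and $R(r)/r\to 1$. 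If you want to salvage the dyadic idea, you would have to control $n(2^{k+1})-n(R)$ in terms of $\mathscr{L}(r,\infty,g)\log^m\mathscr{L}(r,\infty,g)$, which in general is false.
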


\begin{proof}
Let $2<r_0<R<\infty$ be such that $\mathscr{L}(r_0,\infty,g)\geq 1$.
If $g$ has no zeros, then \cite[p.~19]{Levin} yields
	$$
	\log |g(z)|\geq -\frac{2r}{R-r}\mathscr{L}(R,\infty,g),\quad r_0<|z|=r<R.
	$$
Choosing $R$ as in \eqref{R}, we find that there exists a set $E_1\subset (1,\infty)$ of finite logarithmic measure such that	
	$$
	\log |g(z)|\gtrsim -\mathscr{L}(r,\infty,g)\cdot \log^m \mathscr{L}(r,\infty,g),
	\quad r\not\in [0,1]\cup E_1,\ r>r_0.
	$$
We have $\ud(E_1)=0$ by \cite[p.~9]{Zheng}, and
hence, from now on, we may suppose that $g$ has zeros. 
	
Again for arbitrary $2<r_0<R<\infty$ such that $\mathscr{L}(r_0,\infty,g)\geq 1$, we define
	$$
	\varphi(z)=\frac{(-R)^n}{a_1\cdots a_n}\prod_{k=1}^n \frac{R(z-a_k)}{R^2-\bar{a}_kz},
	$$
where the points $a_1,\ldots,a_n$ are the zeros of $g$ in the closed disc $\overline{D}(0,R)$. 
Note that $a_j\neq 0$ for all $j=1,\ldots,n$ by the assumption $g(0)=1$. We have
	$$
	\varphi(0)=1\quad\textnormal{and}\quad
	|\varphi(Re^{i\theta})|=\frac{R^n}{|a_1\cdots a_n|}\geq 1.
	$$
The function $\psi(z)=g(z)/\varphi(z)$ has no zeros in $\overline{D}(0,R)$, and hence, by
\cite[p.~19]{Levin}, we have
	\begin{equation*}
	\begin{split}
	\log |\psi(z)| &\geq -\frac{2r}{R-r}\mathscr{L}(R,\infty,\psi)\\
	&\geq -\frac{2r}{R-r}\mathscr{L}(R,\infty,g)+\frac{2r}{R-r}\log \frac{R^n}{|a_1\cdots a_n|}\\
	&\geq -\frac{2r}{R-r}\mathscr{L}(R,\infty,g),\quad r_0<|z|=r<R.
	\end{split}
	\end{equation*}
Choosing $R$ as in \eqref{R} and deducing similarly as above,
	\begin{equation}\label{estimate-psi}
	\log |\psi(z)|\gtrsim -\mathscr{L}(r,\infty,g)\cdot \log^m \mathscr{L}(r,\infty,g),
	\quad r\not\in [0,1]\cup E_1,\ r>r_0,
	\end{equation}
where $E_1\subset (1,\infty)$ has finite logarithmic measure.

We proceed to estimate $|\varphi(z)|$ from below. Clearly,
	$$
	\prod_{k=1}^n \left|R^2-\bar{a}_kz\right|\leq (2R^2)^n,\quad |z|< R.
	$$
Let $\eta>0$ be a small constant. Applying Cartan's lemma as in \cite[p.~22]{Levin},
there exists a union $\U(R)$ of Euclidean discs with the following properties: The sum of the radii
of the discs in $\U(R)$ equals $4\eta R$, and
	$$
	\left|\prod_{k=1}^n R(z-a_k)\right|\geq R^n\left(\frac{2\eta R}{e}\right)^n,
	\quad z\not\in\U(R).
	$$
Consequently,
	$$
	|\varphi(z)|\geq \left(\frac{\eta}{e}\right)^n,\quad |z|=r<R,\ z\not\in \U(R).
	$$
Let $F(R)\subset [0,\infty)$ denote the circular projection of $\U(R)$. As $R\to\infty$,
the set $F(R)$ approaches to a set $F_1$ that satisfies $\ud(F_1)\leq 8\eta$.	Then
	$$
	\log |\varphi(z)|\geq n\log \left(\frac{\eta}{e}\right),\quad |z|=r<R,\ r\not\in F_1.
	$$
By Lemma~\ref{n-lemma},
	$$
	n=n(R)\lesssim \mathscr{L}(R,\infty,g)\cdot \log^m \mathscr{L}(R,\infty,g),
	\quad R\not\in [0,1]\cup E_2,
	$$
where $E_2\subset(1,\infty)$ has finite logarithmic measure. For the choice of $R$ in \eqref{R}, define a set
	\begin{equation}\label{E3}
	E_3:=\{r>r_0: R\in E_2\}.
	\end{equation}
Postponing the proof that $E_3$ has finite logarithmic measure, the set $E:=E_1\cup E_2\cup E_3$ has finite logarithmic measure also, and
	$$
	n=n(R)\lesssim \mathscr{L}(r,\infty,g)\cdot \log^m \mathscr{L}(r,\infty,g),
	\quad r\not\in [0,r_0]\cup E.
	$$
It is known that a set of finite logarithmic measure has zero upper linear density \cite[p.~9]{Zheng}. Hence, by choosing $\eta=\delta/8$, the set $F:=[0,r_0]\cup E\cup F_1$
satisfies $\ud(F)\leq \delta$.	Moreover,
	\begin{equation}\label{varphi-down}
	\log |\varphi(z)|\gtrsim -\left(1+\log\frac{1}{\delta}\right) \mathscr{L}(r,\infty,g)\cdot \log^m \mathscr{L}(r,\infty,g),
	\quad r\not\in F.
	\end{equation}
The assertion follows by applying \eqref{estimate-psi} and \eqref{varphi-down} to
$\psi(z)=g(z)/\varphi(z)$.

It remains to prove that the set $E_3$ in \eqref{E3} has finite logarithmic measure. This is clearly the case if $E_2$ is a bounded set, so we suppose that $E_2$ is unbounded. The set $E_2$ comes from Lemma~\ref{n-lemma}, and hence from Lemma~\ref{B-lemma}. Thus it can be
covered by a union of closed intervals $F$ as in \eqref{F} giving us
	$$
	E_3\subset \{r>r_0: R\in F\}.
	$$
Since $R=R(r)$ is a continuous function in $r$, the pre-image of every closed interval
$[r_\nu,s_\nu]$ constituting the set $F$ is a closed interval, say $[\alpha_\nu,\beta_\nu]$.
It follows that 
	$$
	E_3\subset\cup_{\nu=1}^\infty [\alpha_\nu,\beta_\nu].
	$$
From \eqref{EF-finite-meas} with $\phi(r)=r$, we have
	$$
	\sum_{\nu=1}^\infty \log\frac{s_\nu}{r_\nu}<\infty.
	$$
Since $R(r)/r\to 1$ as $r\to\infty$, we have
	$$
	\log\frac{s_\nu}{r_\nu}\cdot\left(\log\frac{\beta_\nu}{\alpha_\nu}\right)^{-1}\to 1,\quad \nu\to\infty,
	$$
and consequently
	$$
	\int_{E_3}\frac{dr}{r}\leq \sum_{\nu=1}^\infty \int_{\alpha_\nu}^{\beta_\nu}\frac{dr}{r}
	=\sum_{\nu=1}^\infty \log\frac{\beta_\nu}{\alpha_\nu}<\infty
	$$
by the Limit Comparison Test.
\end{proof}

The next lemma follows from \cite[Lemma~5.1]{HYZ} and \cite[Remark~5.2]{HYZ}. 
  
\begin{lemma}[{\cite{HYZ}}]\label{LD-lemma}
Let $f$ be a meromorphic function in $\C$, and suppose that $k,j$ are integers with $k>j\geq0$
and $f^{(j)}\not\equiv 0$. Then there exists a set $E\subset [0,\infty)$ of finite linear
measure such that
     \begin{equation}\label{LD-conclusion}
     \log^+\left|\frac{f^{(k)}(z)}{f^{(j)}(z)}\right|\lesssim 
     \log T(r, f)+\log r,\quad |z|=r\not\in E.
     \end{equation} 
 \end{lemma}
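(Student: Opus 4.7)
The proof will reduce the estimate to the classical pointwise form of the logarithmic derivative lemma of Gol'dberg--Grinshtein, applied to the meromorphic function $g:=f^{(j)}$.

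The first step is to recall that for any meromorphic function $h$ in $\C$ with $h\not\equiv 0$ and any integer $\ell\geq 1$, there exists an exceptional set $E_0\subset[0,\infty)$ of finite linear measure such that
	$$
	\log^+\left|\frac{h^{(\ell)}(z)}{h(z)}\right|\lesssim \log T(r,h)+\log r,\quad |z|=r\not\in E_0.
	$$
This is the pointwise form of the logarithmic derivative lemma; it follows, for example, from the proximity-function version together with a standard Borel-type lemma (which is provided in Lemma~\ref{B-lemma} of the present paper with $\phi\equiv 1$), and it is the content of \cite[Lemma~5.1]{HYZ}.

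Applying this estimate to $h=f^{(j)}$ with $\ell=k-j$, one obtains
	\begin{equation}\label{step-g}
	\log^+\left|\frac{f^{(k)}(z)}{f^{(j)}(z)}\right|\lesssim \log T(r,f^{(j)})+\log r,\quad |z|=r\not\in E_0.
	\end{equation}
The second step is to replace $T(r,f^{(j)})$ by $T(r,f)$. Using $N(r,f^{(j)})\leq (j+1)N(r,f)$ and $m(r,f^{(j)})\leq m(r,f^{(j)}/f)+m(r,f)$, one has
	$$
	T(r,f^{(j)})\leq (j+1)T(r,f)+m\!\left(r,\frac{f^{(j)}}{f}\right)+O(1).
	$$
The Nevanlinna form of the lemma on the logarithmic derivative gives $m(r,f^{(j)}/f)=O(\log T(r,f)+\log r)$ outside a set $E_1\subset[0,\infty)$ of finite linear measure, so that
	$$
	\log T(r,f^{(j)})=\log T(r,f)+O(1),\quad r\not\in E_1.
	$$
Substituting this into \eqref{step-g} and setting $E=E_0\cup E_1$ (still of finite linear measure) yields the conclusion \eqref{LD-conclusion}.

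The only subtlety is the case $f^{(j)}\equiv 0$ but $f\not\equiv 0$, which is excluded by hypothesis; otherwise the proof is a routine combination of two well-known ingredients, and the remark \cite[Remark~5.2]{HYZ} accounts for the bookkeeping needed to keep the exceptional sets of finite linear measure rather than merely finite logarithmic measure. No step presents a genuine obstacle: the work is essentially the careful application of the pointwise logarithmic derivative lemma to $f^{(j)}$ and the comparison of characteristics $T(r,f^{(j)})$ and $T(r,f)$.
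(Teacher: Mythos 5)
The paper does not actually prove this lemma; it is stated as a direct citation of \cite[Lemma~5.1 and Remark~5.2]{HYZ}, so there is no in-paper argument to compare against. Your reconstruction is correct and follows the route one would expect: apply the pointwise form of the logarithmic derivative lemma to $h=f^{(j)}$ to obtain the bound in terms of $T(r,f^{(j)})$, then use $N(r,f^{(j)})\leq (j+1)N(r,f)+O(\log r)$ together with the Nevanlinna proximity-function form of the lemma on $m(r,f^{(j)}/f)$ to pass from $T(r,f^{(j)})$ to $T(r,f)$. Two small points worth being careful about. First, the displayed identity $\log T(r,f^{(j)})=\log T(r,f)+O(1)$ is really only the inequality $\log T(r,f^{(j)})\leq \log T(r,f)+O(1)$ (which is all you need), and it requires noting that $\log T(r,f)+\log r=o(T(r,f))$ as $r\to\infty$; this holds for transcendental $f$, and for rational $f$ the lemma is trivial since $f^{(k)}/f^{(j)}$ is then rational. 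Second, since the conclusion insists on an exceptional set of \emph{finite linear measure} (not merely finite logarithmic measure), the pointwise ingredient you invoke must itself be taken in its finite-linear-measure form -- i.e.\ the Borel-type lemma must be applied with $\phi\equiv 1$ rather than $\phi(r)=r$; you flag this appropriately at the end. With those clarifications your argument is sound and, modulo details, is essentially the content of the cited \cite[Lemma~5.1, Remark~5.2]{HYZ}.
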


\medskip
Finally, we need a lemma which is in the spirit of Frank and Hennekemper \cite[Lemma~7.7]{Laine} and Petrenko \cite[Corollary]{BP}.

\begin{lemma}\label{FHP-lemma}
Let $f_1,\ldots,f_n$ be linearly independent meromorphic solutions of \eqref{lden} with meromorphic coefficients $A_0,\ldots ,A_{n-1}$. Then there exists a set $E\subset [0,\infty)$ of finite linear measure such that for every $j=0,\ldots,n-1$,
	\begin{equation}\label{FHP-conclusion}
	\mathscr{L}(r,\infty,A_j)=O\left(\log r +\max_{1\leq j\leq n}\log T(r,f_j)\right),
	\quad r\not\in E.
	\end{equation}
\end{lemma}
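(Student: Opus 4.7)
The plan is to employ the classical P\'olya--Mammana factorization of the differential operator
\[
L[f]:=f^{(n)}+A_{n-1}f^{(n-1)}+\cdots+A_0 f
\]
attached to the linearly independent solutions $f_1,\ldots,f_n$. This rewrites each coefficient $A_j$ as a polynomial of bounded degree in logarithmic derivatives of explicit meromorphic functions, so that the pointwise estimate of Lemma~\ref{LD-lemma} can be applied factor-by-factor.

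First I would set $W_k:=W(f_1,\ldots,f_k)$ for $k=1,\ldots,n$ (with $W_0\equiv 1$); linear independence of the $f_i$'s guarantees $W_k\not\equiv 0$, so
\[
\Phi_k(z):=\frac{W_k(z)}{W_{k-1}(z)},\qquad q_k(z):=\frac{\Phi_k'(z)}{\Phi_k(z)}
\]
are well-defined meromorphic functions. The P\'olya--Mammana identity
\[
L=(\partial-q_n)(\partial-q_{n-1})\cdots(\partial-q_1),
\]
valid as an equality of meromorphic operators, expands so that every $A_j$ is realised as a polynomial with integer coefficients---of degree and number of terms bounded only in terms of $n$---in the variables $q_k^{(l)}$ for $1\leq k\leq n$ and $0\leq l\leq n-1$.

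Next I would bound each $q_k^{(l)}$ pointwise. Iterating the relation $q_k'=\Phi_k''/\Phi_k-q_k^2$ shows that $q_k^{(l)}$ is itself a polynomial of bounded degree in the higher logarithmic derivatives $\Phi_k^{(s)}/\Phi_k$ for $1\leq s\leq l+1$. Lemma~\ref{LD-lemma} applied to $\Phi_k$ provides the pointwise estimate
\[
\log^+\left|\frac{\Phi_k^{(s)}(z)}{\Phi_k(z)}\right|\lesssim \log T(r,\Phi_k)+\log r,\qquad |z|=r\notin E_{k,s},
\]
where $E_{k,s}\subset[0,\infty)$ has finite linear measure. Standard Wronskian bounds combined with Nevanlinna's logarithmic derivative lemma yield $T(r,\Phi_k)\leq C\max_i T(r,f_i)+O(\log r+\log\max_i T(r,f_i))$ outside a set of finite linear measure, hence $\log T(r,\Phi_k)\leq \max_i\log T(r,f_i)+O(1)$ there. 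Putting these together,
\[
\log^+|q_k^{(l)}(z)|\lesssim \log r+\max_{1\leq i\leq n}\log T(r,f_i)
\]
pointwise on $|z|=r$ outside a finite-measure exceptional set.

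Since $A_j$ is a polynomial of bounded degree in the $q_k^{(l)}$'s, summing the pointwise bounds yields the desired control of $\log^+|A_j(z)|$, hence of $\mathscr{L}(r,\infty,A_j)$, with exceptional set the finite union of the sets introduced above. The main obstacle is to justify the P\'olya--Mammana factorization cleanly in the meromorphic setting and, more importantly, to extract the combinatorial fact that each $A_j$ is a polynomial in the $q_k^{(l)}$ whose degree and term count depend only on $n$; once that is in hand, the remaining estimates are a routine assembly of Lemma~\ref{LD-lemma} with classical Wronskian and logarithmic derivative bounds.
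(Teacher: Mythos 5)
Your argument is correct, and it takes a genuinely different route from the one the paper gestures at. The paper invokes the Frank--Hennekemper reduction-of-order induction of \cite[Lemma~7.7]{Laine}, the only modification being that the pointwise estimate of Lemma~\ref{LD-lemma} replaces the Nevanlinna logarithmic-derivative lemma at each step, so that one lands on a bound for $\mathscr{L}(r,\infty,A_j)$ rather than for $m(r,A_j)$. You instead deploy the P\'olya--Mammana factorization $L=(\partial-q_n)\cdots(\partial-q_1)$, $q_k=\Phi_k'/\Phi_k$, $\Phi_k=W_k/W_{k-1}$, to exhibit each $A_j$ once and for all as an integer polynomial, of degree and length bounded in terms of $n$, in the $q_k^{(l)}$, and then feed Lemma~\ref{LD-lemma} into those finitely many monomials. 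The two routes are close cousins---the reduction-of-order induction is the P\'olya--Mammana factorization performed one factor at a time---but yours is non-inductive and makes the dependence of $A_j$ on logarithmic derivatives explicit, which is arguably cleaner. Two points deserve care when you write it out. First, $W_k\not\equiv 0$ for every $k\leq n$ because a vanishing Wronskian of meromorphic functions forces linear dependence over $\C$; this makes the $\Phi_k$, the $q_k$, and the polynomial identities for the $A_j$ meaningful as identities of meromorphic functions on all of $\C$, not merely away from some discrete set. Second, the bound $T(r,\Phi_k)\lesssim\max_i T(r,f_i)+\log r$ outside a set of finite linear measure (which is what you need before taking logarithms, and which already suffices for the final $O(\log r+\max_i\log T(r,f_i))$) is most cleanly obtained by factoring $W_k=f_1\cdots f_k\cdot\det\bigl(f_i^{(j-1)}/f_i\bigr)$, controlling the proximity part of the determinant by the logarithmic-derivative lemma and the counting part by observing that the poles of the determinant lie among the zeros and poles of the $f_i$ with multiplicity bounded in terms of~$n$.
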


The assertion is easy to verify in the special case $f^{(n)}+A(z)f=0$, as one just needs
to write $|A(z)|=|f^{(n)}(z)/f(z)|$ and apply Lemma~\ref{LD-lemma}.
The proof of the general case is by induction and follows closely that of \cite[Lemma~7.7]{Laine}. Each time the growth of a logarithmic derivative is to be estimated, one should use Lemma~\ref{LD-lemma}. The details are omitted.

\medskip
\begin{remark}
If the coefficients in \eqref{lden} are entire functions, then the conclusion \eqref{FHP-conclusion} of Lemma~\ref{FHP-lemma} can be written alternatively as 
	\begin{equation}\label{FHP-conclusion2}
	\mathscr{L}(r,\infty,A_j)=O\left(\log r +\log T(r)\right),
	\quad r\not\in E,\ j=0,\ldots,n-1.
	\end{equation}
This is a simple consequence of the facts that $T(r,f_j)=m(r,f_j)$ and
	\begin{equation}\label{characteristic-solutionbase}
	\log^+|f_j|\leq \log\sqrt{1+\sum_{k=1}^n |f_k|^2}
	\leq \sum_{k=1}^n \log^+|f_k|+\frac12\log(n+1)
	\end{equation}
for every $j=1,\ldots,n$.
\end{remark}

\medskip
From \eqref{characteristic-solutionbase} it is obvious that at least one of the functions in a solution base of \eqref{lden} is of infinite order if and only if the corresponding characteristic function $T(r)$ of \eqref{lden} is of infinite order. This gives raise to the following version of a well-known result of Frei.

\begin{lemma}[{\cite{Frei-1953},\cite[Theorem~4.2]{Laine}}]\label{Frei-thm}
Suppose that the coefficients $A_0,\ldots, A_{n-1}$ in \eqref{lden} are entire, and that at least one of them is transcendental. Then any characteristic function $T(r)$ of \eqref{lden} must be of infinite order of growth.
\end{lemma}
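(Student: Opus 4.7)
The plan is to reduce the lemma to the classical form of Frei's theorem in \cite{Frei-1953} (see also \cite[Theorem~4.2]{Laine}), which asserts that, under the hypotheses here, equation \eqref{lden} possesses at most $n-1$ linearly independent solutions of finite order of growth. Once this is granted, the inequality \eqref{characteristic-solutionbase} displayed just above lets us transfer the infinite-order property from a single solution in any basis up to the characteristic function $T(r)$ of \eqref{lden}.

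Concretely, I would fix an arbitrary solution base $\{f_1,\ldots,f_n\}$ of \eqref{lden} and let $T(r)$ denote the corresponding characteristic function defined by \eqref{characteristic-function}. By the classical Frei theorem, at least one element of the base, say $f_{k_0}$, satisfies $\rho(f_{k_0})=\infty$. The left-hand inequality in \eqref{characteristic-solutionbase} then gives, for every $r>0$ and every $\theta$,
$$
\log^+|f_{k_0}(re^{i\theta})|\leq \log\sqrt{1+\sum_{k=1}^n |f_k(re^{i\theta})|^2}.
$$
Integrating over $\theta\in[0,2\pi]$ and dividing by $2\pi$ yields $m(r,f_{k_0})\leq T(r)$. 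Since the coefficients in \eqref{lden} are entire, each $f_k$ is entire, so $T(r,f_{k_0})=m(r,f_{k_0})$, and therefore $T(r,f_{k_0})\leq T(r)$ for all $r\geq 0$. Taking order of growth on both sides,
$$
\infty=\rho(f_{k_0})\leq \rho(T),
$$
so $\rho(T)=\infty$. The preceding lemma in the paper shows that any two characteristic functions of \eqref{lden} differ by $O(1)$, so this conclusion is independent of the choice of solution base.

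There is really no substantive obstacle here: the one delicate point is simply knowing that Frei's theorem in the strong form (``at most $n-1$ independent solutions of finite order'') is available, rather than only the weaker statement guaranteeing one infinite-order solution somewhere in the equation. Both are contained in \cite[Theorem~4.2]{Laine}, so the proof amounts to a clean invocation of that result followed by the one-line application of \eqref{characteristic-solutionbase}.
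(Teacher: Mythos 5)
Your proof is correct and follows the same path the paper intends: it invokes Frei's theorem in the form ``at most $n-1$ linearly independent finite-order solutions'' to extract an infinite-order member $f_{k_0}$ of the chosen base, and then uses the left-hand inequality of \eqref{characteristic-solutionbase} to obtain $T(r,f_{k_0})=m(r,f_{k_0})\leq T(r)$, so $T(r)$ has infinite order. This is exactly the one-paragraph remark the paper places before Lemma~\ref{Frei-thm}; you have simply spelled out the integration step and the entirety of $f_{k_0}$ explicitly.
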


%
%

\section{Proofs of theorems}\label{proofs-sec}

\noindent
\emph{Proof of Theorem~\ref{thefirst-mainthm}.}
Let $\{f_1,\ldots,f_n\}$ be a solution base for \eqref{lden}, and let $T(r)$ be the associated characteristic function of \eqref{lden} defined in \eqref{characteristic-function}. From \eqref{characteristic-function}, \eqref{characteristic-solutionbase} and \cite[Corollary~5.3]{HKR},
	$$	
	T(r)\leq \sum_{k=1}^n m(r,f_k)+O(1)
	\lesssim  r\sum_{j=0}^{n-1}M(r,A_j)^{1/(n-j)}+1.
	$$
Thus
	$$
	\log^+T(r)\leq \sum_{j=0}^{n-1}\log^+ M(r,A_j)+O(\log r).
	$$
In the course of proof of \cite[Theorem~2.3]{HYZ}, it is observed that
	\begin{equation}\label{Canada1}
	\log M(r,A_j)\lesssim \log M(r,A_p),\quad j=0,\ldots,n-1,
	\end{equation}
and that at least $n-p$ solutions $f$ in the solution base $\{f_1,\ldots,f_n\}$ satisfy
	\begin{equation}\label{Canada2}
	\log M(r,A_p)\lesssim \log T(r,f),\quad r\not\in E,
	\end{equation}
where $E\subset [0,\infty)$ has finite linear measure. Let $f$ be a solution of \eqref{lden} that satisfies \eqref{Canada2}. Using \eqref{Canada1}, \eqref{Canada2} and the fact that $A_p$ is transcendental, we obtain
	$$
	\log T(r)\lesssim \log M(r,A_p)\lesssim \log T(r,f),\quad r\not\in E.
	$$
Then $f$ is a P-standard solution by (yet to be proved) Theorem~\ref{th-3-BP}.\qed

\medskip
\noindent 
\emph{Proof of Theorem~\ref{P-4.3}.}
The main idea for the proof is from Wittich \cite[p.~54]{Wittich}, which is generalized by Laine in \cite[p.~62]{Laine}. To estimate the logarithmic derivatives, we make use of Lemma~\ref{LD-lemma} from the previous section.

If $f$ is a polynomial, then clearly $\delta_P(a,f)=0$ for every $a\in\C$. Hence we may
suppose that $f$ is transcendental.
It follows from \eqref{lden} that
    \begin{equation*}
     \frac{1}{f-a}=-\frac{1}{aA_0}\left(A_0+A_1\frac{(f-a)'}{f-a}+\cdots+A_n\frac{(f-a)^{(n)}}{f-a}\right)
    \end{equation*}
for any $a\in\C\setminus\{0\}$. Then
     \begin{equation*}
     \begin{split}
     \log^+\frac{1}{|f(z)-a|}&\leq \log^+\frac{1}{|A_0(z)|}+\sum_{j=0}^n\log^+|A_j(z)|+\sum_{j=1}^n\log^+\left|\frac{(f(z)-a)^{(j)}}{f(z)-a}\right|+O(1)\\
     &\leq \mathscr{L}(r,0,A_0)+\sum_{j=0}^n \mathscr{L}(r,\infty,A_j)
     +\sum_{j=1}^n\log^+\left|\frac{(f(z)-a)^{(j)}}{f(z)-a}\right|+O(1).
     \end{split}
     \end{equation*}
Thus it follows from Lemma~\ref{LD-lemma} and the assumptions \eqref{ass-A_i} and \eqref{ass-A_0} that
    \begin{equation}\label{L-L-T-loglog r}
     \log^+\frac{1}{|f(z)-a|}=o(T(r,f)),\quad |z|=r\not\in E_1,
     \end{equation}
where $E_1\subset[0,\infty)$ is the set that consists of the set $E$ appearing in \eqref{ass-A_i} and \eqref{ass-A_0} as well as of the set $E$ appearing in Lemma~\ref{LD-lemma}. Since any set $F\subset [0,\infty)$ of finite linear measure satisfies $\ud (F)=0$, we have $\ud (E_1)<1$. Observing that the right-hand side of \eqref{L-L-T-loglog r} does not depend on the argument of $z$, we deduce that
	$$
	\mathscr{L}(r,a,f)=o(T(r,f)),\quad r\not\in E_1.
	$$
This implies
	$$
	\liminf_{r\to\infty}\frac{\mathscr{L}(r,a,f)}{T(r,f)}
	\leq \underset{r\not\in E_1}{\liminf_{r\to\infty}}\frac{\mathscr{L}(r,a,f)}{T(r,f)}
	\leq \limsup_{r\to\infty}\frac{o(T(r,f))}{T(r,f)}=0,
	$$
that is, $\delta_P(a,f)=0$, and the assertion is now proved. \qed

\medskip
\noindent 
\emph{Proof of Theorem \ref{alternative-thm}.}
Let $f$ be a nontrivial solution of \eqref{lden} satisfying \eqref{ass-A_i} and \eqref{ass-A_0B}.
By the proof of Theorem~\ref{P-4.3}, we may suppose that $f$ is transcendental. If $A_0$
is a polynomial, then $\mathscr{L}(r,0,A_0)=O(1)$, and consequently \eqref{ass-A_0} is valid. The assertion then follows by Theorem~\ref{P-4.3}. Therefore, we may suppose that $A_0$ is transcendental. 

Suppose that $A_0(0)=1$, and choose $\delta>0$ such that $\ud(E)+\delta<1$, where $E$ is the set in \eqref{ass-A_i}. Then, applying Lemma~\ref{minimum-modulus-lemma} to $A_0$, we have
	$$
	\mathscr{L}(r,0,A_0)\lesssim \mathscr{L}(r,\infty,A_0)\log^m \mathscr{L}(r,\infty,A_0),
	\quad r\not\in F,
	$$
where $m>1$ and $F\subset[0,\infty)$ satisfies $\ud(F)<\delta$. The assumption \eqref{ass-A_0B} then gives
	$$
	\mathscr{L}(r,0,A_0)=o(T(r,f)),\quad r\not\in G,
	$$
where $G=E\cup F$ satisfies $\ud(G)\leq \ud(E)+\ud(F)<1$. The assertion follows by the proof of Theorem~\ref{P-4.3}. 

If $A_0(0)\neq 1$, we may find constants $C\in\C$ and $k\in\Z$ such that the function $B_0(z)=Cz^kA_0(z)$ is entire and satisfies $B_0(0)=1$. 
Then Lemma~\ref{minimum-modulus-lemma} can be applied to~$B_0$. Moreover, since $A_0$
is transcendental, we have 
	$$
	\mathscr{L}(r,0,B_0)=\mathscr{L}(r,0,A_0)+O(\log r)
	=(1+o(1))\mathscr{L}(r,0,A_0).
	$$
The assertion now follows similarly as in the case $A_0(0)=1$. \qed

\medskip
\noindent
\emph{Proof of Theorem~\ref{th-3-BP}.}
Let $\{f_1,\ldots,f_n\}$ be a solution base for \eqref{lden}, and let $T(r)$ be the associated characteristic function of \eqref{lden} defined in \eqref{characteristic-function}. We are given that a certain solution $f$ of \eqref{lden} satisfies \eqref{assu-Th3}. As above, we may suppose that $f$ is transcendental.

Suppose first that all of the coefficients $A_0,\ldots ,A_{n-1}$ in \eqref{lden} are polynomials.  Then $f$ is a P-standard solution by Corollary~\ref{polycoeff-cor}. Note that the assumption \eqref{assu-Th3} is not needed in this particular case. 

Suppose then that at least one of the coefficients
of \eqref{lden} is transcendental, in which case $T(r)$ is of infinite order by Lemma~\ref{Frei-thm}.  Thus, from \eqref{assu-Th3}, \eqref{FHP-conclusion2}, and the fact that $f$ is transcendental,
	\begin{equation}\label{T-inf-order}
	\begin{split}
	\mathscr{L}(r,\infty,A_j)&= O(\log r+\log T(r))=O(\log r)+o(T(r,f))\\
	&=o(T(r,f)),
	\quad r\not\in E_2,\ j=0,\ldots,n-1,
	\end{split}	
	\end{equation}
where $E_2$ is the union of the exceptional sets appearing in \eqref{assu-Th3} and \eqref{FHP-conclusion2}, and hence satisfies $\ud(E_2)<1$.	
This implies \eqref{ass-A_i} for the specific solution~$f$. We proceed to prove that \eqref{ass-A_0} holds for the same solution $f$.

If $A_0$ is a polynomial, then 	\eqref{ass-A_0} holds without an exceptional set because
$f$ is transcendental, and consequently the assertion follows by Theorem~\ref{P-4.3}. Hence we may suppose that $A_0$ is transcendental. Let $\alpha>0$ be arbitrarily large but fixed. By \cite[Corollary~3.7]{LHZ}, the set
	$$
	G=\{r\geq 0 : T(r)\geq r^\alpha\}
	$$
satisfies $\ud(G)=1$. Choose $\delta>0$ small enough such that $\ud(E_2)+\delta<1$, where $E_2$ is the exceptional set in \eqref{T-inf-order}, and hence contains the exceptional sets in \eqref{assu-Th3} and \eqref{FHP-conclusion2}.
Now, if $A_0(0)=1$, it follows from Lemma~\ref{minimum-modulus-lemma}, \eqref{assu-Th3} and \eqref{FHP-conclusion2} that
	\begin{equation}\label{A0}
	\begin{split}
	\mathscr{L}(r,0,A_0) &= O\left(\mathscr{L}(r,\infty,A_0)\cdot 
	\log^m \mathscr{L}(r,\infty,A_0)\right)\\
	&=O\big(\log T(r)\cdot 
	\log^m (\log T(r))\big)\\
	&=o(T(r,f)), \quad r\in G\setminus (E_2\cup F),
	\end{split}
	\end{equation}
where $\ud(F)<\delta$. The set $H=G\setminus (E_2\cup F)$ satisfies
	\begin{equation*}
	\begin{split}
	\ud(H) & \geq  \limsup_{r\to\infty}\frac{\int_{G\cap [0,r]}dt-\int_{(E_2\cup F)\cap [0,r]}dt}{r}\\
	& \geq  \limsup_{r\to\infty}\frac{\int_{G\cap [0,r]}dt}{r}+
	\liminf_{r\to\infty}\left(-\frac{\int_{(E_2\cup F)\cap [0,r]}dt}{r}\right)\\
	&= \ud(G)-\ud(E_2\cup F)= 1-\delta>0.
	\end{split}
	\end{equation*}
The estimate in \eqref{A0} is the same as the estimate in \eqref{ass-A_0}, but is valid outside of a different exceptional set. 
By carefully studying the proof of Theorem~\ref{P-4.3} and using \eqref{T-inf-order}, it follows that $f$ is a P-standard solution. The case $A_0(0)\neq 1$ can be dealt with similarly as in the proof of Theorem~\ref{alternative-thm}.  \qed

\medskip
\noindent
\emph{Proof of Theorem~\ref{E-analogue}.}
Suppose that at least one of the coefficients $A_j$ is non-constant. Then $A_j$ is either a non-constant rational function or a transcendental meromorphic function. In both cases, the assumption \eqref{Eremenko-assumption} implies that
	$$
	\log r=o(A(r,f)),\quad r\not\in E.
	$$
Moreover, using \eqref{T-T0} and \eqref{T0-A}, the conclusion of Lemma~\ref{LD-lemma} can be re-written as
	\begin{equation*}
	\log^+\left|\frac{f^{(k)}(z)}{f^{(j)}(z)}\right|\lesssim 
     \log A(r, f)+\log r,\quad |z|=r\not\in E_1,
	\end{equation*}
where $E_1\subset[0,\infty)$ has finite linear measure. Denote $F=[0,\infty)\setminus (E\cup E_1)$. Then $\ud(F)>0$ and 	
	\begin{equation}\label{LD-conclusion2}
	\log^+\left|\frac{f^{(k)}(z)}{f^{(j)}(z)}\right|=o(A(r, f)),\quad |z|=r\in F.
	\end{equation}	
Deducing similarly as in the proof of \cite[Theorem~4.3]{Laine} (alternatively, see the proof of Theorem~\ref{P-4.3}), operating in the set $F$ and using \eqref{LD-conclusion2} every time an estimate is needed for logarithmic derivatives, it follows that $\delta_E(a,f)=0$ for every $a\in\mathbb{C}\setminus\{0\}$. This implies the assertion. 

Suppose then that all coefficients $A_j$ are constant functions. Then it is well known that $f$ is a linear combination of terms of the form $z^ke^{\alpha z}$, where $k\geq 0$ is an integer and $\alpha\in\C\setminus\{0\}$ is a root of the associated characteristic equation. Then $T(r,f)=(C+o(1))r$ for some constant $C>0$ \cite[Satz~1]{Stein}. 
For $a\in\mathbb{C}\setminus\{0\}$, we have
	$$
	m(r,a,f)=o(r),\quad r\to\infty,
	$$
by \cite[Satz~2]{Stein}, and hence the Valiron deficiency given by
	\begin{equation}\label{Valiron-def}
	\delta_V(a,f):=\limsup_{r\to\infty}\frac{m(r,a,f)}{T(r,f)}
	\end{equation}
satisfies $\delta_V(a,f)=0$. Since $f$ has lower order $\mu(f)=1$, it follows that
	$$
	0\leq \delta_E(a,f)\leq \pi\sqrt{\delta_V(a,f)(2-\delta_V(a,f))}=0,
	$$
see \cite[Theorem~L]{Marchenko} or \cite[Theorem~2]{Marchenko2}. This implies the assertion. \qed

\medskip
\noindent
\emph{Proof of Theorem~\ref{E1}.}
By Lemma~\ref{A-lemma}(b), we have
	\begin{equation*}
	\underset{r\in H_2}{\lim_{r\to\infty}}\frac{A(r,f)}{\log r}=\infty
	\end{equation*}
where the set $H_2\subset [0,\infty)$ satisfies $\ud(H_2)=1$. Therefore, the identity in \eqref{LD-conclusion2} holds for $F=H_2\setminus (E\cup E_1)$, which satisfies
	\begin{equation*}
	\begin{split}
	\ud(F) & \geq  \limsup_{r\to\infty}\frac{\int_{H_2\cap [0,r]}dt-\int_{(E\cup E_1)\cap [0,r]}dt}{r}\\
	& \geq  \limsup_{r\to\infty}\frac{\int_{H_2\cap [0,r]}dt}{r}+
	\liminf_{r\to\infty}\left(-\frac{\int_{(E\cup E_1)\cap [0,r]}dt}{r}\right)\\
	&= \ud(H_2)-\ud(E\cup E_1)= 1-\ud(E)>0.
	\end{split}
	\end{equation*}
Deducing similarly as in the proof of \cite[Theorem~4.3]{Laine} or as in the proof of Theorem~\ref{P-4.3}, by operating in the set $F$ and using \eqref{LD-conclusion2}, it follows that $\delta_E(a,f)=0$ for every $a\in\mathbb{C}\setminus\{0\}$. \qed

%
%

\section{Concluding remarks}\label{concluding-sec}

Recall the Valiron deficiency $\delta_V(a,f)$ for a meromorphic function $f$ from \eqref{Valiron-def}.
If $\delta(a,f)>0$ for $a\in\widehat{\C}$, then $a$ is called a \emph{Valiron deficient value} for $f$. 

An analogue of Theorem~\ref{N-standard-thm} for Valiron deficient values is obtained in \cite{GHW}, where \eqref{Wittich-assumption} is assumed to hold as $r\to\infty$ without an exceptional set. An exceptional set is not allowed here, because the quantity $\delta_V$ is defined in terms of limit superior. This  is in contrast to the situation with the quantities $\delta_N, \delta_P, \delta_E$, which are all defined in terms of limit inferior, for which the exceptional sets in the reasoning are irrelevant.  
For these reasons, the study of Valiron standard solutions $f$ of \eqref{lden} defined by $\delta_V(a,f)=0$ for every $a\in\C\setminus\{0\}$ may not be of further interest.

%
%

\bigskip
E-mail: \texttt{janne.heittokangas@uef.fi} and \texttt{samu.pulkkinen@uef.fi}

\textsc{University of Eastern Finland, Department of Physics and Mathematics, P.O.~Box 111, 80100 Joensuu, Finland}

\medskip
E-mail: \texttt{huiy@dlmu.edu.cn}

\textsc{School of Science, Dalian Maritime University, Dalian, Liaoning, 116026, People's Republic of China}
 
\medskip
E-mail: \texttt{zmrn.amine@gmail.com}

\textsc{National Higher School of Mathematics
P.O.~Box 75, Mahelma 16093, Sidi Abdellah (Algiers), Algeria}

\end{document}